\newcommand{\ls}{\leqslant}
\newcommand{\gr}{\geqslant}
\newcommand{\norm}[1]{\left\Vert#1\right\Vert}
\newcommand{\set}[1]{\left\{#1\right\}}
\newcommand{\brac}[1]{\left(#1\right)}
\newcommand{\scalar}[1]{\left \langle #1 \right \rangle}
\newcommand{\Real}{\mathbb{R}}
\newcommand{\vrad}{{\rm vrad}}
\newcommand{\Eps}{\mathcal{E}}
\begin{document}

\title*{$M$-estimates for isotropic convex bodies and their $L_q$-centroid bodies}
\author{Apostolos Giannopoulos and Emanuel Milman}
\institute{Apostolos Giannopoulos \at  Department of Mathematics, University of Athens, Panepistimioupolis 157-84, Athens, Greece.
\email{apgiannop@math.uoa.gr}
\and Emanuel Milman \at Department of Mathematics,
Technion - Israel Institute of Technology, Haifa 32000, Israel. \email{emilman@tx.technion.ac.il}}
%
%

\maketitle

\begin{abstract}
\footnotesize Let $K$ be a centrally-symmetric convex body in $\Real^n$ and let $\|\cdot\|$ be its induced norm on
${\mathbb R}^n$. We show that if $K \supseteq r B_2^n$ then:
\[
\sqrt{n} M(K) \ls C \sum_{k=1}^{n}  \frac{1}{\sqrt{k}} \min\left(\frac{1}{r} , \frac{n}{k} \log\Big(e + \frac{n}{k}\Big) \frac{1}{v_{k}^{-}(K)}\right) .
\]
where $M(K)=\int_{S^{n-1}} \|x\|\, d\sigma(x)$ is the mean-norm, $C>0$ is a universal constant, and $v^{-}_k(K)$ denotes the minimal volume-radius of a $k$-dimensional orthogonal projection of $K$.
We apply this result to the study of the mean-norm of an isotropic convex body $K$ in ${\mathbb R}^n$ and its $L_q$-centroid bodies.
In particular, we show that if $K$ has isotropic constant $L_K$ then:
\[
 M(K)\ls \frac{C\log^{2/5}(e+ n)}{\sqrt[10]{n}L_K}  .
\]
\end{abstract}

\section{Introduction}

Let $K$ be a centrally-symmetric convex compact set with non-empty interior (``body") in Euclidean space $({\mathbb R}^n,\langle\cdot ,\cdot\rangle)$.
We write $\|\cdot\|$ for the norm induced on ${\mathbb R}^n$ by $K$ and $h_K$ for the support function of $K$; this is precisely the dual norm $\|\cdot\|^{\ast }$.
The parameters:
\begin{equation}\label{eq:1.2}
M(K)=\int_{S^{n-1}} \|x\|\, d\sigma(x)\quad\hbox{and}\quad
M^{\ast }(K)=\int_{S^{n-1}}h_K(x)\,d\sigma (x),
\end{equation}
where $\sigma$ denotes the rotationally invariant probability measure
on the unit Euclidean sphere $S^{n-1}$, play a central role in the asymptotic theory of finite
dimensional normed spaces.

Let $\vrad(K) := \brac{|K|/|B_2^n|}^{1/n}$ denote the volume-radius of $K$, where $|A|$ denotes Lebesgue measure in the linear hull of $A$ and $B_2^n$ denotes the unit Euclidean ball. It is easy to check that:
\begin{equation}
M(K)^{-1}\ls \vrad(K) \ls M^{\ast }(K)  = M(K^\circ) ,
\end{equation}
where $K^\circ = \set{ y \in \Real^n \,:\, \scalar{x,y} \ls 1 \;\hbox{for all}\; x \in K}$ is the polar body to $K$, i.e. the unit-ball of the dual norm $\|\cdot\|^\ast$. Indeed,
the left-hand side is a simple consequence of Jensen's inequality after
we express the volume of $K$ as an integral in polar coordinates, while the right-hand side is
the classical Urysohn inequality. In particular, one always has $M(K)M^{\ast }(K)\gr 1$.

In the other direction, it is known from results of Figiel--Tomczak-Jaegermann \cite{Figiel-Tomczak-1979}, Lewis \cite{Lewis-1979} and Pisier's
estimate \cite{Pisier-1982} on the norm of the Rademacher projection, that for any centrally-symmetric convex body $K$, there exists $T\in GL(n)$ such that:
\begin{equation}\label{eq:MM*}
M(TK) M^{\ast }(TK)\ls C \log n  ,
\end{equation}
where $C >0$ is a universal constant. Throughout this note, unless otherwise stated, all constants $c,c',C,\ldots$ denote universal numeric constants, independent of any other parameter, whose value may change from one occurrence to the next. We write $A\simeq B$ if  there exist absolute constants $c_1,c_2>0$ such that $c_1 A\ls B\ls c_2 A$.

The role of the linear map $T$ in (\ref{eq:MM*}) is to put the body in a good ``position", since without it $M(K) M^*(K)$ can be arbitrarily large.
The purpose of this note is to obtain good upper bounds on the parameter $M(K)$, when $K$ is already assumed to be in a good position - the isotropic position.
A convex body $K$ in ${\mathbb R}^n$ is called isotropic if it has
volume $1$, its barycenter is at the origin, and there exists a
constant $L_K>0$ such that:
\begin{equation}\label{eq:1.1}
\int_K \langle x,\theta\rangle^2 dx=L_K^2 ,~ \hbox{for all}\; \theta \in S^{n-1} .
\end{equation}
It is not hard to check that every convex body $K$ has an isotropic affine image which is uniquely determined
up to orthogonal transformations \cite{Milman-Pajor-LK}. Consequently, the isotropic constant $L_K$ is an affine invariant of $K$. A
central question in asymptotic convex geometry going back to Bourgain \cite{BourgainMaximalFunctionsOnConvexBodies} asks if there exists
an absolute constant $C>0$ such that $L_K\ls C$ for every (isotropic)
convex body $K$ in $\Real^n$ and every $n \gr 1$. Bourgain \cite{Bourgain-1991} proved that $L_K\ls
C\sqrt[4]{n}\log n$ for every centrally-symmetric convex body $K$ in ${\mathbb
R}^n$. The currently best-known general estimate, $L_K\ls
C\sqrt[4]{n}$, is due to Klartag \cite{Klartag-2006} (see also the work of Klartag and E. Milman
\cite{Klartag-EMilman-2012} and a further refinement of their approach by Vritsiou \cite{Vritsiou-2013}).

It is known that if $K$ is a centrally-symmetric isotropic convex body in ${\mathbb R}^n$ then
$K\supseteq L_K B_2^n$, and hence trivially $M(K)\ls 1 / L_K$.
It seems that, until recently, the problem of bounding $M(K)$ in isotropic position had not been studied and there were
no other estimates besides the trivial one. The example of the normalized $\ell_\infty^n$ ball shows that the
best one could hope is $M(K)\ls C\sqrt{\log n}/\sqrt{n}$. Note that obtaining a bound of the form $M(K)\ls n^{-\delta }L_K^{-1}$
immediately provides a non-trivial upper bound on $L_K$, since $M(K) \gr \vrad(K)^{-1} \simeq 1/\sqrt{n}$, and hence
$L_K\ls c^{-1} n^{\frac{1}{2}-\delta }$. The current best-known upper bound on $L_K$
suggests that $M(K) \ls C (n^{1/4} L_K)^{-1} $ might be a plausible goal.

Paouris and Valettas (unpublished) proved that
for every isotropic centrally-symmetric convex body $K$ in ${\mathbb R}^n$ one
has:
\begin{equation}\label{eq:6.5.1}
M(K)\ls \frac{C\sqrt[3]{\log (e+n)}}{\sqrt[12]{n}L_K} .
\end{equation}
Subsequently, this was extended by Giannopoulos, Stavrakakis, Tsolomitis and Vritsiou in \cite{Giannopoulos-Stavrakakis-Tsolomitis-Vritsiou-TAMS}
to the case of the $L_q$-centroid bodies $Z_q(\mu )$ of an isotropic
log-concave probability measure $\mu$ on $\mathbb R^n$ (see Section 5 for the necessary
definitions). The approach of \cite{Giannopoulos-Stavrakakis-Tsolomitis-Vritsiou-TAMS}
was based on a number of observations regarding the local structure of $Z_q(\mu)$;
more precisely, lower bounds for the in-radius of their proportional projections and estimates for their
dual covering numbers (we briefly sketch an improved version of this approach in Section 7).

In this work we present a different method, applicable to general centrally-symmetric convex
bodies, which yields better quantitative estimates. As always, our starting point is Dudley's entropy estimate (see e.g. \cite[Theorem 5.5]{Pisier-book}):
\begin{equation}\label{eq:Dudley}
\sqrt{n} M^*(K) \ls C \sum_{k \gr 1}\frac{1}{\sqrt{k}} e_k(K,B_2^n) ,
\end{equation}
where $e_k(K,B_2^n)$ are the entropy numbers of $K$. Recall that the covering number $N(K,L)$ is defined to be the minimal number of translates of $L$ whose union covers $K$, and that $e_k(K,L) := \inf \set{ t > 0 \, : \, N(K,t L) \ls 2^k}$.

Our results depend on the following natural volumetric parameters associated with $K$ for each $k=1,\ldots,n$:
\[
w_k(K) := \sup \set{ \vrad(K \cap E): E\in G_{n,k}} ~,~ v^{-}_k(K) := \inf \set{ \vrad(P_E(K)) : E \in G_{n,k} } ,
\]
where $G_{n,k}$ denotes the Grassmann manifold of all $k$-dimensional linear subspaces of $\Real^n$, and $P_E$ denotes orthogonal projection onto $E \in G_{n,k}$. Note that by the Blaschke--Sanatal\'o inequality and its reverse form due to Bourgain and V. Milman (see Section \ref{sec:prelim1}), it is immediate to verify that $w_k(K^\circ) \simeq \frac{1}{v^{-}_k(K)}$.

\begin{theorem} \label{thm:covering2-intro}
For every centrally-symmetric convex body $K$ in $\Real^n$ and $k \gr 1$:
\[
e_k(K,B_2^n) \ls C  \frac{n}{k} \log \Big(e+\frac{n}{k}\Big) \sup_{1 \ls m \ls \min(k,n)} \set{ 2^{-\frac{k}{3m}} w_m(K) } .
\]
\end{theorem}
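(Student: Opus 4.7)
The strategy I would pursue is to establish a base covering estimate tied to a choice of section dimension $m$, and then produce the factor $2^{-k/(3m)}$ by iterating this estimate geometrically. The base estimate I aim to prove is: for each $1 \leq m \leq n$,
\[
N(K, \tau_m B_2^n) \leq 2^{3m}, \qquad \tau_m := C\,\frac{n}{m}\log\!\bigl(e + n/m\bigr)\,w_m(K).
\]
To prove the base estimate, I would pick $E \in G_{n,m}$ essentially realising $w_m(K)$, so that the section $K \cap E$ has volume at most $w_m(K)^m\,|B_2^m|$, and then convert this volumetric section bound into a global covering bound by combining it with Milman's $M$-position technology (reverse Brunn--Minkowski together with Pisier's $\alpha$-regular $M$-position). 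The logarithmic factor would enter via Pisier's bound (\ref{eq:MM*}) on the norm of the Rademacher projection; the $n/m$ factor reflects the dimensional gap between the ambient space and the section used to produce the cover.

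Given the base estimate, the factor $2^{-k/(3m)}$ at covering level $k > 3m$ would come from a successive-refinement iteration. The key point is that for each translate $x + \tau_m B_2^n$ in the initial cover, the symmetrised intersection $\tilde L := L - L$, with $L := (x + \tau_m B_2^n) \cap K$, is a centrally-symmetric body of diameter at most $2\tau_m$, so automatically $w_m(\tilde L) \leq \tau_m$. If a \emph{scale-sensitive} variant of the base estimate is available that exploits this smaller $w_m(\tilde L)$ to cover $\tilde L$ by $2^{3m}$ translates of radius $\tau_m/2$, then $j$ iterations produce $2^{3m(j+1)}$ translates of radius $2^{-j}\tau_m$. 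Choosing $j := \lceil k/(3m) \rceil$ yields a cover of $K$ by $2^k$ translates of radius $\lesssim 2^{-k/(3m)}\tau_m$, and optimising over $m \in \{1, \ldots, \min(k,n)\}$ produces the claimed bound.

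\textbf{Main obstacle.} The principal difficulty is twofold. First, the base estimate itself is not a routine manipulation: converting a bound on a single section volume into a global covering of $K$ by Euclidean balls seems to require the full apparatus of Milman's $M$-position, with careful control of the logarithmic loss through Pisier's estimate. Second -- and this is the crux of the matter -- the iteration only closes if the base estimate admits a scale-sensitive form, one which at each refinement step yields a cover by $2^{3m}$ translates of radius halved rather than of radius inflated by the factor $(n/m)\log(e+n/m)$ that the naive base estimate produces. Extracting such a scale-sensitive variant (perhaps by choosing the $M$-position adaptively at each refinement level, so that the logarithmic loss is absorbed once globally rather than paid at each step) is the most delicate ingredient. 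Finally, reconciling the resulting ambient factor with the precise $(n/k)\log(e+n/k)$ stated in the theorem, rather than the looser $(n/m)\log(e+n/m)$ that the base estimate produces directly, likely requires the same adaptive choice of $M$-position and is what ultimately ties $m$ to the covering level $k$ in the correct way.
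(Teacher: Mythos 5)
There is a genuine gap, and you have correctly located it yourself: the successive-refinement iteration does not close. After the first step each piece $\tilde L = L - L$ satisfies only $w_m(\tilde L) \ls 2\tau_m$, so re-applying your base estimate covers $\tilde L$ by $2^{3m}$ balls of radius $\simeq \frac{n}{m}\log(e+\frac{n}{m})\cdot 2\tau_m$, which is \emph{larger} than $\tau_m$, not $\tau_m/2$. The loss factor $\frac{n}{m}\log(e+\frac{n}{m})$ is intrinsic to any base estimate of this type (it comes from the $M$-position/QS machinery) and would be paid again at every scale, so no choice of "adaptive $M$-position" repairs this: the hoped-for scale-sensitive variant is essentially equivalent to the theorem one is trying to prove. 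Note also that your base estimate $N(K,\tau_m B_2^n)\ls 2^{3m}$ is itself not a single-section volumetric fact; a bound on one section's volume radius only yields a Gelfand-number (diameter of a codimension-$2m$ section) estimate as in Theorem \ref{thm:main-s}, and converting the full sequence of such estimates into covering numbers already requires Carl's theorem.

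The paper's mechanism is different and avoids iteration entirely. One writes $e_k(K,B_2^n)\ls e_{k/3}(K,\Eps)\,e_{2k/3}(\Eps,B_2^n)$ where $\Eps=\Eps_{K,\alpha_k}$ is Pisier's $\alpha$-regular $M$-ellipsoid with $\alpha_k = 2-\frac{1}{\log(e+n/k)}$. The geometric decay $2^{-k/(3m)}$ is not produced by refinement: it is already present in the exact formula for entropy numbers of ellipsoids, $e_j(\Eps,B_2^n)\simeq \sup_m 2^{-j/m}w_m(\Eps)$. The comparison between $w_m(\Eps)$ and $w_m(K)$ is then a \emph{single} volumetric step, done in the dual via $w_m \simeq 1/v_m^{-}$ (Blaschke--Santal\'o and Bourgain--Milman) and the covering bound $N(K^\circ, e_{k/3}(K^\circ,\Eps^\circ)\Eps^\circ)\ls 2^{k/3}$; the two Pisier factors $P_{\alpha_k}(n/k)^{1/\alpha_k}$ (from $e_{k/3}(K,\Eps)$ and $e_{k/3}(K^\circ,\Eps^\circ)$) combine to give the prefactor $\frac{n}{k}\log(e+\frac{n}{k})$. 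This single-scale comparison is precisely what replaces the multi-scale iteration you could not close.
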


By invoking Carl's theorem (see Section \ref{sec:prelim1}), a slightly weaker version of Theorem \ref{thm:covering2-intro} may be deduced from the following stronger statement:

\begin{theorem}\label{thm:main-s-intro}
Let $K$ be a centrally-symmetric convex body in $\Real^n$. Then for any $k=1,\ldots,\lfloor n/2 \rfloor$
there exists $F \in G_{n,n-2k}$ so that:
\begin{equation} \label{eq:F-estimate-intro}
K \cap F \subseteq C \frac{n}{k} \log\Big(e + \frac{n}{k}\Big) w_{k}(K) B_2^n \cap F  ,
\end{equation}
and dually, there exists $F \in G_{n,n-2k}$ so that:
\begin{equation} \label{eq:PF-estimate-intro}
P_F(K) \supseteq \frac{1}{C \frac{n}{k} \log(e + \frac{n}{k})} v^{-}_{k}(K) P_F(B_2^n) .
\end{equation}
\end{theorem}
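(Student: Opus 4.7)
The two statements are dual to one another: applying the section version to $K^\circ$ together with the Blaschke--Santal\'o inequality and its reverse form (Bourgain--V. Milman) inside each $k$-dimensional subspace gives $w_k(K^\circ)\simeq 1/v^{-}_k(K)$, which converts the section statement for $K^\circ$ into the projection statement for $K$ and vice versa. I would therefore prove only the section inclusion \eqref{eq:F-estimate-intro}.

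The key preliminary is a moment estimate for the norm $\|\cdot\|=\|\cdot\|_K$. For each $E\in G_{n,k}$, polar integration gives
\[
\vrad(K\cap E)^k \;=\; \int_{S^{n-1}\cap E}\|\theta\|^{-k}\,d\sigma_E(\theta),
\]
and averaging this identity over the Grassmannian (using the Haar-measure factorization $\int_{G_{n,k}}\int_{S^{n-1}\cap E}f\,d\sigma_E\,dE=\int_{S^{n-1}}f\,d\sigma$) combined with the hypothesis $\vrad(K\cap E)\ls w_k(K)$ for every $E$ yields
\[
\int_{S^{n-1}}\|\theta\|^{-k}\,d\sigma(\theta) \;\ls\; w_k(K)^k.
\]
By Markov's inequality on the sphere, the ``small-norm'' set $A_t:=\{\theta\in S^{n-1}:\|\theta\|\ls t\}$ has $\sigma(A_t)\ls (t\,w_k(K))^k$. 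Since \eqref{eq:F-estimate-intro} is equivalent to producing $F\in G_{n,n-2k}$ with $F\cap A_{1/R}=\emptyset$ for $R\simeq (n/k)\log(e+n/k)\,w_k(K)$, the task reduces to an escape-through-the-mesh problem on $G_{n,n-2k}$.

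The crux, and what I expect to be the main obstacle, is controlling the escape probability. A direct Gordon-type bound would require an estimate on the Gaussian (or spherical) width of $A_t$, but the crude inclusion $A_t\subseteq tK\cap B_2^n$ would only yield a bound proportional to $t\sqrt{n}\,M(K)$, bringing in a parameter uncontrolled by $w_k(K)$ alone. I would circumvent this by a multi-scale argument: decompose $A_{1/R}$ into dyadic shells $\{t/2^{j}\ls\|\theta\|\ls t/2^{j-1}\}$, apply the moment bound on each shell, and chain the resulting width estimates across the $O(\log(e+n/k))$ relevant scales, using the surplus codimension (doubled from $k$ to $2k$) to absorb the union-bound cost over scales. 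Balancing the per-scale contributions against the codimension budget should produce the stated factor $C(n/k)\log(e+n/k)$; making this balancing precise, so that only a single $w_k(K)$ appears (rather than a supremum over scales as in Theorem \ref{thm:covering2-intro}), is the technical heart of the proof.
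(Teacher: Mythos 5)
Your setup is fine as far as it goes: the duality reduction via $w_k(K^\circ)\simeq 1/v_k^{-}(K)$ is exactly how the paper passes between \eqref{eq:F-estimate-intro} and \eqref{eq:PF-estimate-intro}, and the moment bound $\int_{S^{n-1}}\|\theta\|^{-k}d\sigma\ls w_k(K)^k$ with its Markov consequence $\sigma(A_t)\ls (t\,w_k(K))^k$ is correct. But the reduction to an ``escape through the mesh'' problem driven by this \emph{measure} bound is where the argument breaks, and the multi-scale chaining you sketch does not repair it. The obstruction is structural: the spherical measure of a set says nothing about whether a subspace of codimension $2k$ can avoid it. If $H\in G_{n,2k+1}$ and $A$ is a $\delta$-neighbourhood of $S^{n-1}\cap H$, then $\sigma(A)$ is arbitrarily small for small $\delta$, yet \emph{every} $F\in G_{n,n-2k}$ satisfies $\dim(F\cap H)\gr 1$ and hence meets $A$. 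So no escape statement can be certified by measure alone at codimension $2k$; Gordon-type escape genuinely requires a width bound, and your dyadic shells only come with measure bounds. You acknowledge that the crude width estimate $t\sqrt{n}M(K)$ is useless here, but you never supply a mechanism for converting the per-shell measure bounds into per-shell width bounds --- and there is none without injecting further structural information about $K$ beyond $w_k(K)$. This is precisely the ``technical heart'' you defer, and it is not a balancing issue but a missing idea.

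The paper's proof supplies that structural input through Pisier's $\alpha$-regular $M$-ellipsoid $\Eps=\Eps_{K,\alpha_k}$. It proves the projection statement \eqref{eq:PF-estimate-intro} directly (the section statement then follows by your duality): the Gelfand-number estimate in \eqref{eq:Pisier-ck} gives $E\in G_{n,n-k}$ with $P_E(K)\supseteq P_{\alpha_k}^{-1}(k/n)^{1/\alpha_k}P_E(\Eps)$; one then discards the $k$ shortest axes of the ellipsoid $P_E(\Eps)$ to find $F\subseteq E$ of codimension $2k$ with $P_F(\Eps)\supseteq \inf_{H\in G_{n,k}}\vrad(P_H(\Eps))\,B_F$; and finally the covering estimate $e_k(K,\Eps)$ from \eqref{eq:Pisier-ek}, combined with the trivial volumetric bound $N(P_H(K),s P_H(\Eps))\gr \brac{\vrad(P_H(K))/(s\,\vrad(P_H(\Eps)))}^k$, yields $\vrad(P_H(\Eps))\gr \frac{1}{2e_k(K,\Eps)}\vrad(P_H(K))\gr c\,(k/n)^{1/\alpha_k}v_k^{-}(K)$. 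Choosing $\alpha_k=2-1/\log(e+n/k)$ produces the factor $\frac{n}{k}\log(e+n/k)$. If you want to salvage your plan, you would need to replace the escape step by an argument of this type; as written, the proposal has a genuine gap.
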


A weaker version of Theorem \ref{thm:main-s-intro}, with the parameters $w_k(K)$, $v^{-}_k(K)$ above replaced by:
\[
v_k(K) := \sup \set{ \vrad(P_E(K)) : E \in G_{n,k} } ~,~ w^{-}_k(K) := \inf \set{ \vrad(K \cap E) : E\in G_{n,k}} ,
\]
respectively, was obtained by V. Milman and G. Pisier in \cite{VMilman-Pisier-1987}
(see Theorem \ref{thm:milman-pisier}). Our improved version is crucial for properly exploiting the corresponding properties of isotropic convex bodies.

By (essentially) inserting the estimates of Theorem \ref{thm:covering2-intro} into \eqref{eq:Dudley} (with $K$ replaced by $K^{\circ }$), we obtain that if $K$ is a centrally-symmetric convex body in $\Real^n$ with $K \supseteq r B_2^n$ then:
\begin{equation}\label{eq:general-M}
\sqrt{n} M(K) \ls C \sum_{k=1}^{n}  \frac{1}{\sqrt{k}} \min\left(\frac{1}{r} , \frac{n}{k} \log\Big(e + \frac{n}{k}\Big) \frac{1}{v_{k}^{-}(K)}\right) .
\end{equation}

In the case of the centroid bodies $Z_q(\mu)$ of an isotropic log-concave probability measure $\mu $ on ${\mathbb R}^n$,
one can obtain precise information on the growth of the parameters $v_k^{-}(Z_q(\mu ))$. We recall the relevant definitions in Section \ref{sec:prelim2}, and use \eqref{eq:general-M} to deduce in Section \ref{sec:results} that:
\begin{equation} \label{eq:MZq-intro}
2\ls q\ls q_0 := (n\log n)^{2/5} \;\;\; \Longrightarrow \;\;\; M(Z_q(\mu)) \ls C\frac{\sqrt{\log q}}{\sqrt[4]{q}} .
\end{equation}

In particular, since $Z_n(\mu) \supseteq Z_{q_0}(\mu)$ and $M(K) \simeq M(Z_n(\lambda_{K / L_K})) / L_K$, where $\lambda_A$ denotes the uniform probability measure on $A$, we immediately obtain:

\begin{theorem}\label{th:current-M-estimate}
If $K$ is a centrally-symmetric isotropic convex body in $\mathbb R^n$ then:
\begin{equation}\label{eq:current-M-estimate}
 M(K)\ls \frac{C \log^{2/5}(e+n)}{\sqrt[10]{n}L_K}.
\end{equation}
\end{theorem}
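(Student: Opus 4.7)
The plan is to combine the two facts quoted just above the theorem, namely the inclusion $Z_{q_0}(\mu)\subseteq Z_n(\mu)$ and the identification $M(K)\simeq M(Z_n(\lambda_{K/L_K}))/L_K$, with the key estimate \eqref{eq:MZq-intro} evaluated at $q=q_0$. So the bulk of the argument is bookkeeping: the hard analytic work has been done in establishing \eqref{eq:MZq-intro}.

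First I would set $\mu=\lambda_{K/L_K}$, the uniform probability measure on the rescaled body $K/L_K$. Since $K$ is isotropic with constant $L_K$, rescaling by $1/L_K$ normalizes the covariance to the identity, so $\mu$ is an isotropic log-concave probability measure on $\Real^n$. By the cited identification, $M(K)\simeq M(Z_n(\mu))/L_K$, so it suffices to upper bound $M(Z_n(\mu))$. For this, use the monotonicity $q\mapsto h_{Z_q(\mu)}(\theta)=\brac{\int |\scalar{x,\theta}|^q\,d\mu(x)}^{1/q}$, which gives the inclusion $Z_{q_0}(\mu)\subseteq Z_n(\mu)$ (valid for $q_0\ls n$, i.e.\ for $n$ large enough; the small-$n$ case is absorbed into the universal constant via $M(K)\ls 1/L_K$). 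Inclusion of bodies reverses to domination of norms, so $M(Z_n(\mu))\ls M(Z_{q_0}(\mu))$, and applying \eqref{eq:MZq-intro} at the largest admissible $q=q_0=(n\log n)^{2/5}$ (where the right-hand side is smallest) yields
\[
M(Z_n(\mu))\ls M(Z_{q_0}(\mu))\ls C\,\frac{\sqrt{\log q_0}}{\sqrt[4]{q_0}}.
\]

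Finally, a one-line calculation produces the exponents in the statement. Since $q_0=(n\log n)^{2/5}$, we have $\sqrt[4]{q_0}=(n\log n)^{1/10}$ and $\log q_0\simeq \log(e+n)$, hence
\[
\frac{\sqrt{\log q_0}}{\sqrt[4]{q_0}}\simeq \frac{\log^{1/2}(e+n)}{n^{1/10}\log^{1/10}(e+n)}=\frac{\log^{2/5}(e+n)}{\sqrt[10]{n}}.
\]
Dividing by $L_K$ via $M(K)\simeq M(Z_n(\mu))/L_K$ delivers \eqref{eq:current-M-estimate}. The only genuine obstacle in the whole program is the proof of \eqref{eq:MZq-intro} itself, which in turn requires inserting sharp lower estimates for $v^{-}_k(Z_q(\mu))$ into the master bound \eqref{eq:general-M}; once \eqref{eq:MZq-intro} is accepted, the derivation of the theorem is merely the optimization of $q\mapsto \sqrt{\log q}/\sqrt[4]{q}$ over the admissible range, with the cutoff $q_0=(n\log n)^{2/5}$ dictated by the range of validity of \eqref{eq:MZq-intro}.
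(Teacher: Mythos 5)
Your proposal is correct and is essentially identical to the paper's own derivation: the authors likewise set $\mu=\lambda_{K/L_K}$, use $Z_{q_0}(\mu)\subseteq Z_n(\mu)$ together with $M(K)\simeq M(Z_n(\lambda_{K/L_K}))/L_K$ (via $cK\subseteq Z_n(\lambda_K)\subseteq K$), and evaluate Theorem \ref{thm:M-Zq} at $q=q_0=(n\log(e+n))^{2/5}$. Your exponent bookkeeping and the remark about absorbing small $n$ into the constant are both fine.
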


It is clear that \eqref{eq:current-M-estimate} is not optimal. Note that if (\ref{eq:MZq-intro}) were to remain valid until $q_0 = n$, we would obtain the bound $M(K) \ls C \frac{\sqrt{\log (e+n)}}{n^{1/4} L_K}$, which as previously explained would in turn imply that $L_K \ls C \sqrt{\log (e+n)} \; n^{1/4}$, in consistency with the best-known upper bound on the isotropic constant. We believe that it is an interesting question to extend the range where (\ref{eq:MZq-intro}) remains valid. In Section \ref{sec:results}, we obtain such an extension when $\mu$ is in addition assumed to be $\Psi_\alpha$ (see Section \ref{sec:results} for definitions).

\medskip

Our entire method is based on Pisier's regular versions of V. Milman's $M$-ellipsoids associated to a given centrally-symmetric convex body $K$, comparing between volumes of sections and projections of $K$ and those of its associated regular ellipsoids. This expands on an approach already employed in \cite{Pisier-book,BKM,Klartag-VMilman-2005,Klartag-Psi2,Giannopoulos-Stavrakakis-Tsolomitis-Vritsiou-TAMS}.

\medskip

We conclude the introduction by remarking that the dual question of providing an upper bound for the mean-width $M^{\ast }(K)$ of an
isotropic convex body $K$ has attracted more attention in recent years. Until recently, the best known estimate
was $M^{\ast }(K)\ls C n^{3/4}L_K$, where $C>0$ is an absolute
constant (see \cite[Chapter 9]{BGVV-book-isotropic} for a number of proofs of this inequality).
The second named author has recently obtained in \cite{EMilman-2014} an essentially optimal answer to this question -
for every isotropic convex body $K$ in ${\mathbb R}^n$ one has $M^{\ast }(K)\ls C\sqrt{n}\log^2n\,L_K$.

\section{Preliminaries and notation from the local theory} \label{sec:prelim1}

Let us introduce some further notation. Given $F \in G_{n,k}$,
we denote $B_F=B_2^n\cap F$ and $S_F=S^{n-1}\cap F$.
A centrally-symmetric convex body $K$ in ${\mathbb R}^n$ is a compact convex set with non-empty interior so that $K = -K$. The
norm induced by $K$ on ${\mathbb R}^n$ is given by $\| x\|_K=\min\{ t\gr 0:x\in tK\}$. The support function of $K$ is defined by $h_K(y):=\norm{y}_K^* =\max \bigl\{\langle y,x\rangle : x\in K\bigr\}$, with $K^\circ$ denoting the unit-ball of the dual-norm. By the Blaschke--Santal\'o inequality (the right-hand side below) and its reverse form due to Bourgain and V. Milman
\cite{Bourgain-VMilman-1987} (the left-hand side), it is known that:
\begin{equation} \label{eq:BS}
0 < c \ls \vrad(K)\vrad(K^{\circ }) \ls 1 .
\end{equation}
Recall that the $k$-th entropy number is defined as
$$e_k(K,L) := \inf \set{ t > 0 \; :\; N(K,t L) \ls 2^k }.$$
A deep and very useful fact about entropy numbers
is the Artstein--Milman--Szarek duality of entropy theorem \cite{Artstein-VMilman-Szarek-2004},
which states that:
\begin{equation}\label{eq:artstein-szarek-milman}
e_k(B_2^n, K) \ls C e_{ck}(K^\circ,B_2^n)
\end{equation}
for every centrally-symmetric convex body $K$ and $k\gr 1$.

In what follows, a crucial role is played by G. Pisier's
regular version of V. Milman's $M$-ellipsoids. It was shown by Pisier (see \cite{Pisier-1989}
or \cite[Chapter 7]{Pisier-book}) that for any centrally-symmetric convex body
$K$ in $\Real^n$ and $\alpha \in (0,2)$, there exists an ellipsoid $\Eps = \Eps_{K,\alpha}$ so that:
\begin{equation} \label{eq:Pisier-ek}
\max\{e_k(K,\Eps),e_k(K^\circ,\Eps^\circ),e_k(\Eps,K),e_k(\Eps^\circ,K^\circ)\} \ls P_\alpha \brac{\frac{n}{k}}^{1/\alpha},
 \end{equation}
 where $P_\alpha \ls C \left(\frac{\alpha}{2-\alpha}\right)^{1/2}$ is a positive constant depending only on $\alpha $.

 Given a pair of centrally-symmetric convex bodies $K,L$ in $\Real^n$, the Gelfand numbers $c_k(K,L)$ are defined as:
\[
c_k(K,L) := \begin{cases} \inf \set{ {\rm diam}_{L \cap F} (K \cap F) : F \in G_{n,n-k} } & k = 0,\ldots,n-1 \\ 0 & \text{otherwise} \end{cases} ,
\]
where ${\rm diam}_{A}(B) := \inf \set{ R > 0 : B \subseteq R A}$. We denote $c_k(K) = c_k(K,B_2^n)$ and $e_k(K) = e_k(K,B_2^n)$.

Carl's theorem \cite{Carl-1981} relates any reasonable Lorentz norm of the sequence of  entropy numbers $\set{e_m(K,L)}$ with that of the Gelfand numbers $\set{c_m(K,L)}$. In particular, for any $\alpha >0$, there exist constants $C_{\alpha },C'_{\alpha}>0$ such that for any $k \gr 1$:
\begin{equation}\label{eq:carl}
\sup_{m=1,\ldots,k} m^{\alpha} e_m(K,L) \ls C_\alpha \sup_{m=1,\ldots,k} m^{\alpha} c_m(K,L) ,
\end{equation}
and:
\begin{equation} \label{eq:carl2}
\sum_{m=1}^k m^{-1+\alpha} e_m(K,L) \ls C'_\alpha \sum_{m=1}^k  m^{-1+\alpha} c_m(K,L)  .
\end{equation}
In fact, Pisier deduces the covering estimates of \eqref{eq:Pisier-ek}
from an application of Carl's theorem, after establishing the following estimates:
\begin{equation} \label{eq:Pisier-ck}
\max\{c_k(K,\Eps) , c_k(K^\circ, \Eps^\circ)\} \ls P_\alpha \brac{\frac{n}{k}}^{1/\alpha}\;\;\hbox{for all}\; k \in \set{1,\ldots,n} .
\end{equation}

Our estimates depend on a number of volumetric parameters of $K$, already defined in the Introduction, which we now recall:
\begin{equation*}
w_k(K) := \sup \set{ \vrad(K \cap E) : E \in G_{n,k} } , v_k(K) := \sup \set{ \vrad(P_E (K)):E \in G_{n,k} },
\end{equation*}
and
\begin{equation*}
w^{-}_k(K) := \inf \set{ \vrad(K \cap E) : E \in G_{n,k} } , v^{-}_k(K) := \inf \set{ \vrad(P_E (K)) : E \in G_{n,k} }.
\end{equation*}
Note that $0 < c \ls w^{-}_k(K) v_k(K^\circ) , v^{-}_k(K) w_k(K^\circ) \ls 1$ by (\ref{eq:BS}). Also observe that $k \mapsto v_k(K)$ is non-increasing by
the Alexandrov inequalities and Kubota's formula, and that $k \mapsto w^{-}_k(K)$ is non-decreasing
by polar-integration and Jensen's inequality.

We refer to the books \cite{VMilman-Schechtman-book} and
\cite{Pisier-book} for additional basic facts from the local theory of
normed spaces.

\section{New covering estimates} \label{sec:covering}

The main result of this section provides a general upper bound for the entropy numbers $e_k(K,B_2^n)$.

\begin{theorem} \label{thm:covering1}
Let $K$ be a centrally-symmetric convex body in $\Real^n$, and let $k \gr 1$. Then:
\[
e_k(K,B_2^n) \ls C  \frac{n}{k} \log \Big(e+\frac{n}{k}\Big) \sup_{1 \ls m \ls \min(k,n)} \set{ 2^{-\frac{k}{3m}} w_m(K) }.
\]
\end{theorem}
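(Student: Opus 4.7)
My plan is to use Pisier's regular $M$-ellipsoid $\Eps = \Eps_{K,\alpha}$ to split the covering of $K$ into two stages, and to transfer sectional information from $K$ to the ellipsoid via Brunn's principle. Fix $\alpha \in (0,2)$ (to be chosen later in terms of $n/k$). By sub-multiplicativity of covering numbers, for any split $k = k_1 + k_2$,
\[
e_{k}(K,B_2^n) \ls e_{k_1}(K,\Eps) \cdot e_{k_2}(\Eps,B_2^n).
\]
The first factor is bounded by $P_\alpha (n/k_1)^{1/\alpha}$ via \eqref{eq:Pisier-ek}. For the second, since $\Eps$ is an ellipsoid with semi-axes $\lambda_1 \gr \cdots \gr \lambda_n$, the classical dyadic entropy bound for diagonal operators yields
\[
e_{k_2}(\Eps,B_2^n) \ls C \sup_{1\ls m\ls n} 2^{-k_2/m} (\lambda_1\cdots\lambda_m)^{1/m} = C \sup_{1\ls m\ls n} 2^{-k_2/m} w_m(\Eps),
\]
the equality reflecting the fact that the largest $m$-dimensional central section of an ellipsoid is spanned by its top $m$ semi-axes.

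The crucial step is to relate $w_m(\Eps)$ back to $w_m(K)$. I would invoke Pisier's other covering $\Eps \subseteq \bigcup_{i=1}^N (y_i + sK)$ with $N \ls 2^{k'}$ and $s \ls P_\alpha(n/k')^{1/\alpha}$, intersect with an arbitrary $F \in G_{n,m}$, and bound each slice using Brunn's theorem, which for a centrally-symmetric body asserts that $w \mapsto |K \cap (F+w)|$ is maximised at $w = 0$ on $F^\perp$:
\[
|(y_i + sK) \cap F| = s^m |K \cap (F - y_i/s)| \ls s^m |K \cap F|.
\]
Summing over the $N$ translates and extracting the $m$-th root gives $\vrad(\Eps \cap F) \ls 2^{k'/m}\, s\, \vrad(K \cap F)$. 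Taking the supremum over $F \in G_{n,m}$ and optimising by choosing $k' \simeq m$ absorbs the exponential $2^{k'/m}$ into an absolute constant, delivering the sectional transfer $w_m(\Eps) \ls C P_\alpha (n/m)^{1/\alpha} w_m(K)$.

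Plugging this into the two factors with $k_1 = k_2 = \lfloor k/2 \rfloor$ produces
\[
e_{k}(K,B_2^n) \ls C P_\alpha^2 \sup_{1\ls m\ls \min(k,n)} 2^{-k/(2m)} \brac{\frac{n^2}{km}}^{1/\alpha} w_m(K).
\]
Choosing $\alpha$ close to $2$, specifically $\alpha = 2 - 1/\log(e+n/k)$, which via $P_\alpha \ls C\sqrt{\alpha/(2-\alpha)}$ controls $P_\alpha^2 \ls C \log(e+n/k)$ and renders $(n^2/(km))^{1/\alpha} \simeq n/\sqrt{km} = (n/k)(k/m)^{1/2}$, reduces the bound to $C \log(e+n/k)(n/k)\sup_m (k/m)^{1/2} 2^{-k/(2m)} w_m(K)$; the elementary inequality $\sqrt{u}\,2^{-u/6} \ls C$ applied with $u = k/m$ then absorbs the mismatch between $2^{-k/(2m)}$ and the target $2^{-k/(3m)}$. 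The principal obstacle is the sectional transfer step, since the $M$-ellipsoid only provides a global covering comparison between $K$ and $\Eps$; the Brunn argument introduces the $2^{k'/m}$ factor whose balance against Pisier's $(n/k')^{1/\alpha}$ is what ultimately dictates both the final $2^{-k/(3m)}$ decay and the logarithmic loss.
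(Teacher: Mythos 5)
Your proposal is correct, and its skeleton coincides with the paper's proof: factor $e_k(K,B_2^n)$ through Pisier's $\alpha$-regular $M$-ellipsoid, use the exact entropy formula $e_j(\Eps,B_2^n)\simeq\sup_m 2^{-j/m}w_m(\Eps)$, transfer $w_m(\Eps)$ back to $w_m(K)$ via a volumetric covering bound, and take $\alpha=2-1/\log(e+n/k)$. The one place where you genuinely deviate is the transfer step. The paper works dually: it writes $w_m(\Eps)\simeq 1/v_m^{-}(\Eps^\circ)$ and compares $\vrad(P_E K^\circ)$ with $\vrad(P_E\Eps^\circ)$ through the covering $N(K^\circ, e_s(K^\circ,\Eps^\circ)\Eps^\circ)\ls 2^s$ with $s=k/3$ (projections behave well under coverings, so no Brunn-type input is needed, but one pays the Santal\'o/Bourgain--Milman equivalence). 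You instead stay on the primal side, covering $\Eps$ by $2^{k'}$ translates of $e_{k'}(\Eps,K)K$ and controlling each slice $|(y_i+sK)\cap F|\ls s^m|K\cap F|$ by Brunn's theorem for symmetric bodies; choosing $k'=m$ makes the $2^{k'/m}$ loss a constant. Both transfers are legitimate since (\ref{eq:Pisier-ek}) controls all four entropy numbers; yours is arguably more direct (no polarity), while the paper's choice $s=k/3$ lets the $2^{s/m}$ loss be cancelled against the ellipsoid's $2^{-2k/(3m)}$ decay rather than against a separate optimization in $k'$.

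One small quantitative imprecision: the claim $(n^2/(km))^{1/\alpha}\simeq n/\sqrt{km}$ is not quite right for $m\ll k$, since $(n^2/(km))^{1/\alpha-1/2}\ls e\,(k/m)^{1/2}$ rather than $O(1)$; the correct statement is $(n^2/(km))^{1/\alpha}\ls C\,(n/k)(k/m)$. This is harmless, because the resulting extra power of $k/m$ is absorbed exactly as you indicate by $u\,2^{-u/6}\ls C$ with $u=k/m$, still landing on the target $2^{-k/(3m)}$.
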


We combine this fact with Dudley's entropy estimate
\begin{equation}\label{eq:dudley-pisier}
\sqrt{n} M^*(K) \ls C \sum_{k \gr 1}\frac{1}{\sqrt{k}} e_k(K,B_2^n).
\end{equation}
(see \cite[Theorem 5.5]{Pisier-book} for this formulation). As an immediate consequence, we obtain:

\begin{corollary}\label{cor:covering1}
Let $K$ be a centrally-symmetric convex body in $\Real^n$ with $K \subseteq R B_2^n$. Then:
\[
\sqrt{n} M^*(K) \ls C \sum_{k \gr 1} \frac{1}{\sqrt{k}} \min \set{R, \frac{n}{k} \log \Big(e+\frac{n}{k}\Big) \sup_{1 \ls m \ls \min(k,n)} \set{  2^{-\frac{k}{3m}} w_m(K)} }  .
\]
Dually, let $K$ be a centrally-symmetric convex body in $\Real^n$ with $K \supseteq r B_2^n$. Then:
\[
\sqrt{n} M(K) \ls C \sum_{k \gr 1}  \frac{1}{\sqrt{k}} \min \set{\frac{1}{r}, \frac{n}{k} \log \Big(e+\frac{n}{k}\Big) \sup_{1 \ls m \ls \min(k,n)} \set{  2^{-\frac{k}{3m}} \frac{1}{v_m^{-}(K)} } }
\]
\end{corollary}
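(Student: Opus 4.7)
The plan is to derive both inequalities of the corollary as essentially formal consequences of Theorem \ref{thm:covering1}, Dudley's estimate \eqref{eq:dudley-pisier}, and the volumetric duality $v^{-}_k(K)\,w_k(K^\circ) \ls 1$ recorded in Section \ref{sec:prelim1}.

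First I would treat the primal statement. The assumption $K \subseteq R\, B_2^n$ gives the trivial entropy bound $e_k(K, B_2^n) \ls R$ for every $k \gr 1$, since already $N(K, R\, B_2^n) = 1$. Combining this with the sharper estimate of Theorem \ref{thm:covering1} yields
\[
e_k(K,B_2^n) \ls C \min\set{ R,\ \frac{n}{k} \log\Big(e+\frac{n}{k}\Big) \sup_{1 \ls m \ls \min(k,n)}\set{2^{-\frac{k}{3m}} w_m(K)} }.
\]
Substituting this bound into Dudley's inequality \eqref{eq:dudley-pisier} produces the first claim of the corollary (the universal constant $C$ from Dudley merges with the one from Theorem \ref{thm:covering1}).

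Next I would treat the dual statement by polarity. Since $K \supseteq r\, B_2^n$ is equivalent to $K^\circ \subseteq (1/r)\, B_2^n$, and $M(K) = M^*(K^\circ)$ by definition of the two quantities, I apply the primal statement just established to the body $K^\circ$ with $R = 1/r$:
\[
\sqrt{n}\, M(K) \ls C \sum_{k \gr 1} \frac{1}{\sqrt{k}} \min\set{ \frac{1}{r},\ \frac{n}{k}\log\Big(e+\frac{n}{k}\Big) \sup_{1 \ls m \ls \min(k,n)} \set{2^{-\frac{k}{3m}} w_m(K^\circ) } }.
\]
To conclude, I invoke the inequality $v^{-}_m(K)\,w_m(K^\circ) \ls 1$ stated in Section \ref{sec:prelim1} (which follows by applying Blaschke--Santal\'o inside each subspace $E$, via the identity $(P_E K)^\circ = K^\circ \cap E$, then selecting $E$ realizing the supremum defining $w_m(K^\circ)$ and bounding $\vrad(P_E K) \gr v^{-}_m(K)$). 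This gives $w_m(K^\circ) \ls 1/v^{-}_m(K)$, and substitution produces the second claim up to the universal constant $C$.

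Since every ingredient is at hand, there is no genuine obstacle: the proof is a clean concatenation of a trivial volumetric bound, Theorem \ref{thm:covering1}, Dudley's inequality, and the Santal\'o-type section/projection duality. The only point meriting care is the reduction from $w_m(K^\circ)$ to $1/v^{-}_m(K)$, and the bookkeeping in the $\min$ inside the Dudley sum, neither of which is delicate.
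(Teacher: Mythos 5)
Your proposal is correct and follows essentially the same route as the paper: the trivial bound $e_k(K,B_2^n)\ls R$ combined with Theorem \ref{thm:covering1} fed into Dudley's estimate for the primal claim, and polarity via $v^{-}_m(K)\,w_m(K^\circ)\ls 1$ for the dual one. The only difference is that you spell out the Blaschke--Santal\'o step that the paper leaves implicit, which is fine.
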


\begin{proof}
The first claim follows by a direct application of \eqref{eq:dudley-pisier} if we estimate $e_k(K,B_2^n)$
using Theorem \ref{thm:covering1} and the observation that $e_k(K,B_2^n)\ls R$ for all $k\gr 1$ (recall that $K\subseteq RB_2^n$).
Then, the second claim follows by duality since $w_m(K^\circ) \simeq \frac{1}{v^{-}_m(K)}$.
\qed
\end{proof}

We will see in the next section that the supremum over $m$ above is unnecessary and that one may always use $m=k$,
only summing over $k=1,\ldots,n$. But we proceed with the proof of Theorem \ref{thm:covering1}, as it is a simpler approach.

\medskip

\noindent {\it Proof of Theorem \ref{thm:covering1}}.
Assume without loss of generality that $k$ is divisible by 3, and use the estimate:
\[
e_k(K,B_2^n) \ls e_{k/3}(K,\Eps) e_{2k/3}(\Eps,B_2^n),
\]
where $\Eps = \Eps_{K,\alpha_k}$ is Pisier's $\alpha_k$-regular $M$-ellipsoid associated to $K$, with $\alpha_k \in [1,2)$ to be determined. The first term is controlled directly by Pisier's regular covering estimate (\ref{eq:Pisier-ek}).
For the second term we use the following simple fact about covering numbers of ellipsoids (see e.g. \cite[Remark 5.15]{Pisier-book}):
\[
e_{j}(\Eps,B_2^n) \simeq \sup_{1 \ls m \ls n} 2^{-j/m} w_m(\Eps) \simeq \sup_{1 \ls m \ls \min(j,n)} 2^{-j/m} w_m(\Eps);
\]
the latter equivalence follows since $w_m(\Eps)$ is the geometric average of the $m$ largest principal radii of $\Eps$, and so $m \mapsto w_m(\Eps)$ is non-increasing.
Now recall that
\begin{equation}\label{eq:step1}w_m(\Eps) \simeq 1/v_m^{-}(\Eps^{\circ }).\end{equation}
To estimate $v_m^{-}(\Eps^{\circ })$, we use a trivial volumetric bound: for any $E\in G_{n,m}$,
\begin{align*}\frac{\vrad(P_E (K^\circ))}{\vrad(P_E (\Eps^\circ)) e_s(K^\circ,\Eps^\circ)} &\ls N(P_E (K^\circ ), e_s(K^\circ,\Eps^\circ) P_E (\Eps^\circ))^{1/m}\\
&\ls N(K^\circ , e_s(K^\circ,E^\circ) \Eps^\circ)^{1/m} \ls 2^{s/m} ,
\end{align*}
for $s \gr 1$ to be determined. Consequently:
\[
v_m^{-}(\Eps^\circ) \gr \frac{1}{2^{s/m} e_s(K^\circ,\Eps^\circ)} v_m^{-}(K^\circ) ,
\]
and plugging this back into (\ref{eq:step1}), we deduce:
\[
w_m(\Eps) \ls C 2^{s/m} e_s(K^\circ,\Eps^\circ) w_m(K) ,
\]
and hence:
\[
e_{2k/3}(\Eps,B_2^n) \ls C \sup_{1 \ls m \ls \min(k,n)} 2^{\frac{s-2k/3}{m}} e_{s}(K^\circ,\Eps^\circ) w_m(K) .
\]
Setting $s = k/3$, we conclude that:
\[
e_{2k/3}(\Eps,B_2^n) \ls C e_{k/3}(K^\circ,\Eps^\circ)  \sup_{1 \ls m \ls \min(k,n)} 2^{-\frac{k}{3m}} w_m(K) .
\]
Combining everything, we obtain:
\begin{eqnarray*}
e_k(K , \Eps) & \ls & C e_{k/3}(K,\Eps) e_{k/3}(K^\circ,\Eps^\circ) \sup_{1 \ls m \ls \min(k,n)} 2^{-\frac{k}{3m}} w_m(K) \\
& \ls & \frac{C'}{2-\alpha_k} \brac{\frac{n}{k}}^{\frac{2}{\alpha_k}} \sup_{1 \ls m \ls \min(k,n)} 2^{-\frac{k}{3m}} w_m(K) .
\end{eqnarray*}
Setting $\alpha_k = 2 - \frac{1}{\log(e + n/k)}$, the assertion follows.
\qed

\begin{remark} \label{rem:duality} 
Theorem \ref{thm:covering1} implies the following dual covering estimate:
\begin{equation}\label{eq:remark-3-3}
e_k(B_2^n,K) \ls C  \frac{n}{k} \log \left(e+\frac{n}{k}\right) \sup_{1 \ls m \ls \min(k,n)} \set{ 2^{-\frac{k}{3m}} \frac{1}{v_m^{-}(K)} } .
\end{equation}
Indeed, this is immediate from the duality of entropy theorem \eqref{eq:artstein-szarek-milman}
and the fact that $w_m(K^\circ) \simeq \frac{1}{v_m^{-}(K)}$. Alternatively, one may simply repeat the proof of Theorem \ref{thm:covering1} with the roles of $K$ and $B_2^n$ exchanged.
\end{remark}

\section{New diameter estimates}\label{section-9-3} \label{sec:diam}

This section may be read independently of the rest of this work, and contains a refinement of the following result of V. Milman and G. Pisier from \cite{VMilman-Pisier-1987}, as exposed in \cite[Lemma 9.2]{Pisier-book}:

\begin{theorem}[Milman--Pisier]\label{thm:milman-pisier}
Let $K$ be a centrally-symmetric convex body in $\Real^n$. Then, for any $k=1,\ldots,n/2$:
\[
c_{2k}(K) \ls C \frac{n}{k} \log\Big(e + \frac{n}{k}\Big) v_{k}(K) .
\]
In other words, there exists $F \in G_{n,n-2k}$ so that:
\begin{equation} \label{eq:MP-1}
K \cap F \subseteq C \frac{n}{k} \log\Big(e + \frac{n}{k}\Big) v_{k}(K) B_F ,
\end{equation}
and dually, there exists $F \in G_{n,n-2k}$ so that:
\begin{equation} \label{eq:MP-2}
P_F(K) \supseteq \frac{1}{C \frac{n}{k} \log(e + \frac{n}{k})} w^{-}_{k}(K) B_F .
\end{equation}
\end{theorem}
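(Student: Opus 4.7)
The plan is to insert Pisier's regular $M$-ellipsoid $\mathcal{E} = \mathcal{E}_{K,\alpha}$ between $K$ and $B_2^n$ and exploit the submultiplicativity of Gelfand numbers under composition. If $F_1, F_2 \in G_{n,n-k}$ realize $c_k(K,\mathcal{E})$ and $c_k(\mathcal{E},B_2^n)$ respectively, their intersection $F := F_1 \cap F_2$ has codimension at most $2k$, and chaining the two diameter bounds on $F$ yields
\[
c_{2k}(K,B_2^n) \ls c_k(K,\mathcal{E}) \cdot c_k(\mathcal{E},B_2^n).
\]
Pisier's regularity (\ref{eq:Pisier-ck}) bounds the first factor by $P_\alpha (n/k)^{1/\alpha}$. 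For the second factor, let $a_1 \gr a_2 \gr \cdots \gr a_n$ denote the semi-axes of $\mathcal{E}$; taking $F$ to be the orthogonal complement of the top $k$ axes gives $c_k(\mathcal{E},B_2^n) \ls a_{k+1}$, which by AM--GM is at most $(a_1 \cdots a_{k+1})^{1/(k+1)} = \vrad(P_{E_{k+1}}\mathcal{E}) \ls v_{k+1}(\mathcal{E})$, where $E_{k+1}$ is the span of the top $k+1$ axes.

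The crux is then transferring from $v_{k+1}(\mathcal{E})$ back to $v_k(K)$. For any $E \in G_{n,k+1}$, a trivial volumetric bound combined with the fact that orthogonal projections do not increase covering numbers gives
\[
\vrad(P_E \mathcal{E}) \ls N(P_E\mathcal{E}, t\, P_E K)^{1/(k+1)} \cdot t \cdot \vrad(P_E K) \ls N(\mathcal{E}, tK)^{1/(k+1)} \cdot t \cdot \vrad(P_E K).
\]
Choosing $t = e_{k+1}(\mathcal{E},K) \ls P_\alpha (n/k)^{1/\alpha}$ from (\ref{eq:Pisier-ek}) forces $N(\mathcal{E},tK) \ls 2^{k+1}$, and passing to the supremum over $E$, together with the monotonicity $v_{k+1}(K) \ls v_k(K)$, yields $v_{k+1}(\mathcal{E}) \ls 2 P_\alpha (n/k)^{1/\alpha} v_k(K)$.

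Assembling the three estimates gives $c_{2k}(K,B_2^n) \ls 2 P_\alpha^{\,2} (n/k)^{2/\alpha} v_k(K)$, and since $P_\alpha^{\,2} \ls C \alpha/(2-\alpha)$, the standard optimization $\alpha = 2 - 1/\log(e + n/k)$ makes both $P_\alpha^{\,2} \ls C \log(e+n/k)$ and $(n/k)^{2/\alpha} \ls C (n/k)$, producing (\ref{eq:MP-1}). The dual inequality (\ref{eq:MP-2}) follows by applying (\ref{eq:MP-1}) to $K^\circ$ in place of $K$: the Blaschke--Santal\'o and Bourgain--V. Milman inequalities (\ref{eq:BS}), applied within each $k$-dimensional subspace, give $v_k(K^\circ) \simeq 1/w^{-}_k(K)$, and the polarity identity $(K^\circ \cap F)^\circ_F = P_F(K)$ converts the diameter bound on $K^\circ \cap F$ into the required inclusion for $P_F(K)$.

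The main obstacle is the third step above: transferring from the ellipsoid quantity $v_{k+1}(\mathcal{E})$ to the body quantity $v_k(K)$ with only a loss of order $(n/k)^{1/\alpha}$. This requires invoking Pisier's regularity in the $\mathcal{E} \to K$ covering direction (rather than the $K \to \mathcal{E}$ direction already consumed by the first factor), so that the two resulting $(n/k)^{1/\alpha}$ losses combine to $(n/k)^{2/\alpha}$; it is precisely this combination that makes the final optimization in $\alpha \in (1,2)$ yield only a logarithmic factor rather than a polynomial one.
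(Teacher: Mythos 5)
Your proof is correct, and it reaches the Milman--Pisier bound by a route that is organized differently from the paper's. The paper proves the projection statement (\ref{eq:MP-2}) first and dualizes to get (\ref{eq:MP-1}): it uses \emph{both} Gelfand-number estimates of (\ref{eq:Pisier-ck}) --- one to pass from $P_E(K)$ to $P_E(\Eps)$, the other to lower-bound $\vrad(P_{H'}(\Eps))$ via the pure inclusion chain $P_{H'}(\Eps)\supseteq \Eps\cap H'\supseteq P_{\alpha_k}^{-1}(k/n)^{1/\alpha_k}K\cap H'$ --- and then projects the ellipsoid onto the complement of its shortest axes, landing in codimension $3k$ rather than $2k$. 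You instead prove (\ref{eq:MP-1}) directly and dualize to get (\ref{eq:MP-2}), with the submultiplicativity $c_{2k}(K,B_2^n)\ls c_k(K,\Eps)\,c_k(\Eps,B_2^n)$ as the organizing principle, the identification $c_k(\Eps,B_2^n)\ls a_{k+1}\ls v_{k+1}(\Eps)$ for the ellipsoid factor, and a covering/volumetric transfer from $v_{k+1}(\Eps)$ to $v_k(K)$ via $e_{k+1}(\Eps,K)$ from (\ref{eq:Pisier-ek}). Amusingly, that last transfer step is precisely the mechanism the paper reserves for its \emph{refined} Theorem \ref{thm:main-s} (there applied in the opposite direction, to $e_k(K,\Eps)$), so your argument mixes one Gelfand estimate with one entropy estimate where the paper's Milman--Pisier recovery uses two Gelfand estimates. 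What your version buys is the clean codimension $2k$ and a self-contained primal-side proof; what the paper's buys is that it avoids entropy numbers entirely for this weaker statement, reserving them for the refinement. Two cosmetic remarks: only the Santal\'o direction of (\ref{eq:BS}) is actually needed in your final duality step ($v_k(K^\circ)\,w_k^-(K)\ls 1$), and at $t=e_{k+1}(\Eps,K)$ one should take $t$ infinitesimally larger (or invoke compactness) to guarantee $N(\Eps,tK)\ls 2^{k+1}$; neither affects the argument.
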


Our version refines these estimates by replacing $v_{k}(K)$ and $w^{-}_k(K)$ above by the stronger $w_k(K)$ and $v^{-}_k(K)$ parameters, respectively; this refinement is crucial for our application in this paper.

\begin{theorem} \label{thm:main-s}
Let $K$ be a centrally-symmetric convex body in $\Real^n$. Then for any $k=1,\ldots,n/2$:
\[
c_{2k}(K) \ls C \frac{n}{k} \log\Big(e + \frac{n}{k}\Big) w_{k}(K) .
\]
In other words, there exists $F \in G_{n,n-2k}$ so that:
\begin{equation} \label{eq:F-estimate}
K \cap F \subseteq C \frac{n}{k} \log\Big(e + \frac{n}{k}\Big) w_{k}(K) B_F ,
\end{equation}
and dually, there exists $F \in G_{n,n-2k}$ so that:
\begin{equation} \label{eq:PF-estimate}
P_F(K) \supseteq \frac{1}{C \frac{n}{k} \log(e + \frac{n}{k})} v^{-}_{k}(K) B_F .
\end{equation}
\end{theorem}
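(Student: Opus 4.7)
The plan is to prove the sectional estimate \eqref{eq:F-estimate}; the projection estimate \eqref{eq:PF-estimate} follows from it by Blaschke--Santal\'o duality, since $(K^{\circ} \cap F)^{\circ}_F = P_F(K)$ and $v^{-}_k(K)\, w_k(K^{\circ}) \simeq 1$ make \eqref{eq:PF-estimate} for $K$ equivalent (up to absolute constants) to \eqref{eq:F-estimate} applied to $K^{\circ}$. The overall strategy mirrors the Milman--Pisier proof of Theorem \ref{thm:milman-pisier}: introduce Pisier's $\alpha$-regular $M$-ellipsoid $\Eps = \Eps_{K,\alpha}$ with $\alpha \in (1,2)$ to be optimized, use the submultiplicativity
\[
c_{2k}(K, B_2^n) \ls c_k(K, \Eps) \cdot c_k(\Eps, B_2^n)
\]
(obtained by intersecting two codimension-$k$ subspaces realizing the respective Gelfand numbers and extracting a codimension-$2k$ subspace within the intersection), and handle each factor separately. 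Pisier's estimate \eqref{eq:Pisier-ck} directly gives $c_k(K,\Eps) \ls P_\alpha (n/k)^{1/\alpha}$, while the Courant--Fischer characterization identifies $c_k(\Eps, B_2^n)$ with the $(k+1)$-st semi-axis $\lambda_{k+1}$ of $\Eps$; the monotone ordering $\lambda_{k+1} \ls \lambda_k \ls (\lambda_1 \cdots \lambda_k)^{1/k} = w_k(\Eps)$ then reduces the second factor to bounding $w_k(\Eps)$.

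The crucial step, where the refinement from $v_k$ to $w_k$ takes place, is
\[
w_k(\Eps) \ls C P_\alpha (n/k)^{1/\alpha} w_k(K).
\]
Fix $E \in G_{n,k}$. By \eqref{eq:Pisier-ek} applied with $s=k$, $\Eps$ is covered by $N \ls 2^k$ translates $x_i + tK$ with $t = e_k(\Eps, K) \ls P_\alpha (n/k)^{1/\alpha}$, so $\Eps \cap E \subseteq \bigcup_i (x_i + tK) \cap E$. Because $tK$ is centrally symmetric, Brunn's theorem implies that every affine parallel section is dominated in volume by the central one: $|(x_i + tK) \cap E| \ls |tK \cap E| \ls t^k w_k(K)^k |B_2^k|$, and summing the $N \ls 2^k$ contributions yields $\vrad(\Eps \cap E) \ls 2 t \, w_k(K)$. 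Taking the supremum over $E \in G_{n,k}$ gives the claim.

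The Brunn step is the main obstacle and what distinguishes this argument from the classical one: a covering of $\Eps$ by translates of $K$ naturally transfers to projections via $P_E(\Eps) \subseteq \bigcup P_E(x_i + tK)$, which only yields the weaker projection parameter $v_k(K)$ (this is essentially the original Milman--Pisier bound); sections, by contrast, do not commute with unions, and it is Brunn's inequality for centrally symmetric bodies that allows the passage to central sections and the appearance of $w_k(K)$. Combining the three factors yields $c_{2k}(K, B_2^n) \ls C P_\alpha^2 (n/k)^{2/\alpha} w_k(K)$, and the optimal choice $\alpha = 2 - 1/\log(e + n/k) \in [1,2)$ gives $P_\alpha^2 \ls C \log(e + n/k)$ together with $(n/k)^{2/\alpha} \ls e \cdot (n/k)$ (since $2/\alpha - 1 = 1/(\alpha \log(e + n/k))$ forces $(n/k)^{2/\alpha - 1} \ls e$), producing the claimed bound $c_{2k}(K) \ls C \frac{n}{k} \log(e + n/k) w_k(K)$.
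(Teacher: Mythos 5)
Your proof is correct. It follows the same overall strategy as the paper --- Pisier's $\alpha$-regular $M$-ellipsoid with $\alpha = 2 - 1/\log(e+n/k)$, a splitting of $c_{2k}(K)$ into a $K$-versus-$\Eps$ factor controlled by Pisier's Gelfand-number estimate \eqref{eq:Pisier-ck} and an $\Eps$-versus-$B_2^n$ factor controlled by a volumetric covering comparison --- but runs it on the dual side. The paper establishes the projection estimate \eqref{eq:PF-estimate} first: it uses $c_k(K^\circ,\Eps^\circ)$ to find $E$ with $P_E(K) \supseteq c\,(k/n)^{1/\alpha} P_E(\Eps)$, passes to a further $F \subseteq E$ orthogonal to the shortest axes of $P_E(\Eps)$, and lower-bounds $\vrad(P_H(\Eps))$ by $\vrad(P_H(K))$ via a covering of $P_H(K)$ by translates of $e_k(K,\Eps) P_H(\Eps)$ --- a step that needs no extra geometry because projections commute with coverings. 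You instead prove the section estimate \eqref{eq:F-estimate} first, via submultiplicativity of Gelfand numbers and the identification $c_k(\Eps,B_2^n)=\lambda_{k+1}\ls (\lambda_1\cdots\lambda_k)^{1/k}=w_k(\Eps)$, and your volumetric step covers $\Eps$ by translates of $e_k(\Eps,K)K$ and intersects with $E$; since sections do not commute with unions, the appeal to Brunn's theorem (central sections of a symmetric body dominate parallel affine ones) is exactly the extra input needed, and you correctly identify it as the point where $w_k$ rather than $v_k$ appears. Both routes are valid and yield the same bound; the paper's projection-side formulation avoids Brunn at the cost of the more hands-on two-stage subspace construction that your eigenvalue identification of $c_k(\Eps,B_2^n)$ replaces, and the two dualization steps (yours from $K^\circ$ to $K$ via $(K^\circ\cap F)^\circ_F=P_F(K)$ and $v_k^-(K)w_k(K^\circ)\simeq 1$, the paper's in the opposite direction) are the same Blaschke--Santal\'o argument.
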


Our refinement will come from exploiting the full strength of Pisier's result on the existence of regular $M$-ellipsoids. In contrast, the Milman--Pisier result is based on V. Milman's quotient-of-subspace theorem, from which it seems harder to obtain enough regularity to deduce our proposed refinement.

\medskip

\noindent {\it Proof of Theorem \ref{thm:main-s}}.
Given $k=1,\ldots,n/2$, let $\Eps = \Eps_{K,\alpha_k}$ denote Pisier's $\alpha_k$-regular $M$-ellipsoid, for some $\alpha_k \in [1,2)$ to be determined. By the second estimate in (\ref{eq:Pisier-ck}), we know that there exists $E \in G_{n,n-k}$ so that:
\[
P_E(K) \supseteq \frac{1}{P_{\alpha_k}} \brac{\frac{k}{n}}^{1/\alpha_k} P_E (\Eps ).
\]
For the ellipsoid $\Eps' := P_E (\Eps )\subseteq E$, we may always find a linear subspace $F \subseteq E$ of codimension $m$ in $E$ so that:
\[
P_F (\Eps') \supseteq \inf_{H \in G_m(E)} \sup_{H' \subseteq H} \set{ \vrad(P_{H'} (\Eps')) } B_F,
\]
where $G_m(E)$ is the Grassmannian of all $m$-dimensional linear subspaces of $E$.
Indeed, this is immediate by choosing $H$ to be the subspace spanned by the $m$ shortest axes of $\Eps'$, and setting $F$ to be its orthogonal complement. Consequently, there exists a subspace $F \in G_{n,n-(k+m)}$ so that:
\begin{equation} \label{eq:basic}
P_F(K) \supseteq \frac{1}{P_{\alpha_k}} \brac{\frac{k}{n}}^{1/\alpha_k} \inf_{H \in G_{n,m}} \sup_{H' \subseteq H} \set{ \vrad(P_{H'} (\Eps)) } B_F .
\end{equation}

We now deviate from the proof of our refined version, to show how one may recover the Milman--Pisier estimate; the reader solely interested in the proof of our refinement may safely skip this paragraph. Assume for simplicity that $k < n/3$. By the first estimate in (\ref{eq:Pisier-ck}), we know that there exists $J \in G_{n,n-k}$ so that:
\[
K \cap J \subseteq P_{\alpha_k} \brac{\frac{n}{k}}^{1/\alpha_k} \Eps \cap J .
\]
Given $H \in G_{n,m}$ and denoting $H' := H \cap J \in G_{m'}(H)$ with $m' \in [m-k,m]$, it follows that:
\[
P_{H'}(\Eps) \supseteq \Eps \cap H' \supseteq  \frac{1}{P_{\alpha_k}} \brac{\frac{k}{n}}^{1/\alpha_k}  K \cap H' .
\]
Setting $m = 2k$, it follows from (\ref{eq:basic}) that there exists $F \in G_{n,n-3k}$ so that:
\[
P_F(K) \supseteq \frac{1}{P_{\alpha_k}^2} \brac{\frac{k}{n}}^{2/\alpha_k} \inf \set{ \vrad(K \cap H') \,:\, H' \in G_{n,m'} \;,\; m' \in [k,2k] } B_F .
\]
Noting that the sequence $m' \mapsto w_{m'}^{-}(K)$ is non-decreasing, and setting $\alpha_k = 2 - \frac{1}{\log(e + n/k)}$, we
have found $F \in G_{n,n-3k}$ such that
\[
P_F(K) \supseteq \frac{c}{\frac{n}{k} \log(e + \frac{n}{k})} w_k^{-}(K) ,
\]
as asserted in (\ref{eq:MP-2}) (with perhaps an immaterial constant $3$ instead of $2$). The assertion of (\ref{eq:MP-1}) follows by duality.

To obtain our refinement, we will use instead of the first estimate in (\ref{eq:Pisier-ck}), the covering estimate (\ref{eq:Pisier-ek}) (which Pisier obtains from (\ref{eq:Pisier-ck}) by an application of Carl's theorem, requiring the entire sequence of $c_k$ estimates, not just the one for our specific $k$). Setting $m=k$, we use a trivial volumetric estimate to control $\vrad(P_H (\Eps))$, exactly as in the proof of Theorem \ref{thm:covering1}: for any
$H\in G_{n,k}$,
\[
\frac{\vrad(P_H (K))}{\vrad(P_H (\Eps)) e_k(K,\Eps)} \ls N(P_H(K) , e_k(K,\Eps) P_H (\Eps))^{1/k} \ls N(K , e_k(K,E) \Eps)^{1/k} \ls 2 .
\]
Together with (\ref{eq:Pisier-ek}), we obtain:
\[
\vrad(P_H(\Eps)) \gr \frac{1}{2 e_k(K,\Eps)} \vrad(P_H (K)) \gr \frac{1}{2 P_{\alpha_k}} \brac{\frac{k}{n}}^{1/\alpha_k} \vrad(P_H (K)) .
\]
Plugging this into (\ref{eq:basic}) and setting as usual $\alpha_k = 2 - \frac{1}{\log(e + n/k)}$, the asserted estimate (\ref{eq:PF-estimate}) follows. The other estimate (\ref{eq:F-estimate}) follows by duality.
\qed

\medskip

As immediate corollaries, we have:

\begin{corollary} \label{cor:covering2}
For every centrally-symmetric convex body $K$ in $\Real^n$, $k=1,\ldots,n$ and $\alpha > 0$:
\[
e_k(K,B_2^n) \ls C_\alpha \sup_{m=1,\ldots,k}  \brac{\frac{m}{k}}^{\alpha} \frac{n}{m} \log\Big(e + \frac{n}{m}\Big) w_{m}(K),
\]
where $C_{\alpha }>0$ is a constant depending only on $\alpha $.
\end{corollary}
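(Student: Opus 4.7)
The plan is to deduce Corollary \ref{cor:covering2} directly from Theorem \ref{thm:main-s} via Carl's inequality \eqref{eq:carl} (with $L = B_2^n$), followed by an elementary reindexing. Keeping only the term $m=k$ on the left-hand side of \eqref{eq:carl} gives
\[
k^\alpha e_k(K,B_2^n) \ls C_\alpha \sup_{m=1,\ldots,k} m^\alpha c_m(K,B_2^n),
\]
so after dividing by $k^\alpha$ it suffices to dominate $(m/k)^\alpha c_m(K,B_2^n)$ by the corresponding term in the target supremum, uniformly in $m \in \{1,\ldots,k\}$.

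For the main range $m \gr 2$, I would set $j = \lfloor m/2 \rfloor$, which lies in $\{1,\ldots,\lfloor n/2 \rfloor\}$ for $m \ls n$ and satisfies $j \ls m \ls 3j$. The monotonicity of the Gelfand numbers ($c_m \ls c_{2j}$, since $2j \ls m$) combined with Theorem \ref{thm:main-s} yields
\[
c_m(K,B_2^n) \ls c_{2j}(K) \ls C \frac{n}{j}\log\brac{e + \frac{n}{j}} w_j(K) \ls C' \frac{n}{m}\log\brac{e + \frac{n}{m}} w_j(K),
\]
where the last step uses $j \gr m/3$ to trade $n/j$ for $n/m$ at the cost of a universal constant. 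Multiplying by $(m/k)^\alpha$ and using $(m/k)^\alpha \ls 3^\alpha (j/k)^\alpha$, and then reindexing the supremum by $j$ over $\{1,\ldots,\lfloor k/2\rfloor\}$ and enlarging to $\{1,\ldots,k\}$, one obtains exactly the target right-hand side (with the loss absorbed into $C_\alpha$).

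The residual case $m=1$ is handled by a trivial observation: since any hyperplane section $K \cap F$ of a centrally-symmetric body is itself centrally-symmetric, its Euclidean half-diameter equals a half-chord of $K$ through the origin, hence $c_1(K,B_2^n) \ls w_1(K)$. Consequently $(1/k)^\alpha c_1(K,B_2^n)$ is dominated by the $m=1$ term $(1/k)^\alpha n \log(e+n) w_1(K)$ of the target supremum, with plenty of room. I do not anticipate any serious obstacle: the argument is Carl's theorem plus Theorem \ref{thm:main-s} plus index bookkeeping. The only subtle point worth pausing over is verifying that the reindexing $m \mapsto \lfloor m/2 \rfloor$ incurs only multiplicative constants depending on $\alpha$, which is transparent once one records that $m/j$, $(n/j)/(n/m)$ and $\log(e+n/j)/\log(e+n/m)$ are all bounded by universal constants for $m \gr 2$.
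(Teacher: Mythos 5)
Your proposal is correct and follows essentially the same route as the paper: Carl's inequality \eqref{eq:carl} combined with Theorem \ref{thm:main-s} and the monotonicity of the Gelfand numbers to pass between $c_m$ and $c_{2\lfloor m/2\rfloor}$. You merely spell out the reindexing and the $m=1$ case that the paper dismisses as immediate, and these details check out.
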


\begin{proof}
This is immediate from Theorem \ref{thm:main-s} and Carl's theorem \eqref{eq:carl}.
Note that $k \mapsto c_k(K,B_2^n)$ is non-increasing, and so there is no difference whether we take the supremum on the right-hand-side just on the even integers.
\qed
\end{proof}

\begin{corollary} \label{cor:M}
For every centrally-symmetric convex body $K$ in $\Real^n$ so that $K \subseteq R B_2^n$, we have:
\[
\sqrt{n} M^*(K) \ls C \sum_{k=1}^{n}  \frac{1}{\sqrt{k}} \min\left(R,\frac{n}{k} \log\Big(e + \frac{n}{k}\Big) w_{k}(K)\right) .
\]
Dually, for every centrally-symmetric convex body $K$ in $\Real^n$ so that $K \supseteq r B_2^n$, we have:
\[
\sqrt{n} M(K) \ls C \sum_{k=1}^{n}  \frac{1}{\sqrt{k}} \min\left(\frac{1}{r} , \frac{n}{k} \log\Big(e + \frac{n}{k}\Big) \frac{1}{v_{k}^{-}(K)}\right) .
\]
\end{corollary}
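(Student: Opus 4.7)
The plan is to bound $\sqrt{n}\,M^{\ast}(K)$ by chaining Dudley's entropy inequality~\eqref{eq:dudley-pisier} with Carl's summable inequality~\eqref{eq:carl2}, passing from entropy to Gelfand numbers, and then invoking Theorem~\ref{thm:main-s} on the Gelfand side, where both the trivial diameter bound coming from $K\subseteq R B_{2}^{n}$ and the sharper estimate of Theorem~\ref{thm:main-s} are genuine pointwise inequalities. Placing the $\min$ structure on that side is what allows it to appear inside the sum.

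Concretely, Dudley gives $\sqrt{n}\,M^{\ast}(K) \ls C\sum_{k\gr 1} k^{-1/2}\, e_{k}(K, B_{2}^{n})$. Applying Carl's inequality~\eqref{eq:carl2} with $\alpha = 1/2$, with the truncation parameter sent to infinity---which is legitimate since $c_{k}(K, B_{2}^{n}) = 0$ for every $k \gr n$, so the right-hand side is already a finite sum---yields
\[
\sum_{k\gr 1} k^{-1/2}\, e_{k}(K, B_{2}^{n}) \;\ls\; C\sum_{k=1}^{n-1} k^{-1/2}\, c_{k}(K, B_{2}^{n}).
\]
Now, the hypothesis $K\subseteq R B_{2}^{n}$ implies $K\cap F\subseteq R B_{F}$ for every subspace $F$, hence $c_{k}(K, B_{2}^{n}) \ls R$ for all $k\gr 1$. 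Simultaneously, Theorem~\ref{thm:main-s} yields $c_{2k}(K, B_{2}^{n}) \ls C\,T_{k}$ for $k = 1,\ldots,\lfloor n/2\rfloor$, where $T_{j} := \tfrac{n}{j}\log(e+n/j)\,w_{j}(K)$; combined with the monotonicity of $k\mapsto c_{k}$ this extends to $c_{m}(K, B_{2}^{n}) \ls C\,T_{\lfloor m/2\rfloor}$ for $m\gr 2$, while at $m=1$ one has $c_{1}\ls w_{1}(K) \ls \min(R, T_{1})$, using that $w_{1}(K)$ is the circumradius of the centrally-symmetric body~$K$ and that $n\log(e+n)\gr 1$. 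Taking the smaller of the two estimates gives the term-wise bound $c_{m}(K, B_{2}^{n}) \ls C\min(R, T_{\max(1, \lfloor m/2\rfloor)})$; pairing the consecutive indices $m = 2j, 2j+1$ (both with floor $j$) absorbs the factor of two into the universal constant and collapses the sum to $C\sum_{j=1}^{n} j^{-1/2}\min(R, T_{j})$, proving the direct inequality.

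The dual estimate for $M(K)$ follows by applying the direct one to $K^{\circ}$, using $M^{\ast}(K^{\circ}) = M(K)$, the duality $K\supseteq r B_{2}^{n}\iff K^{\circ}\subseteq (1/r)B_{2}^{n}$, and the equivalence $w_{k}(K^{\circ}) \simeq 1/v_{k}^{-}(K)$ coming from the Blaschke--Santaló / Bourgain--Milman inequalities recalled in Section~\ref{sec:prelim1}. The main conceptual obstacle I anticipate is that Dudley combined with Carl furnishes only the aggregated comparison $\sum k^{-1/2} e_{k} \ls \sum k^{-1/2} c_{k}$, with no pointwise bound of the form $e_{k}\ls \min(R, T_{k})$; in particular, a direct application of Theorem~\ref{thm:covering1} returns only the weaker $\sup$-form of Corollary~\ref{cor:covering1}. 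The decisive observation is that on the Gelfand side both $c_{k}\ls R$ and $c_{k}\ls C\,T_{\lfloor k/2\rfloor}$ are genuine term-wise bounds, so the $\min$ can be imposed there and aggregated only afterwards via Carl's inequality.
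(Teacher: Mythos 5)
Your argument is correct and is essentially the paper's own proof: Dudley's estimate \eqref{eq:dudley-pisier} combined with Carl's inequality \eqref{eq:carl2} (with $\alpha=1/2$) reduces everything to the Gelfand numbers, where the trivial bound $c_k(K)\ls R$ and Theorem \ref{thm:main-s} give the term-wise minimum, and the dual statement follows by polarity via $w_k(K^\circ)\simeq 1/v_k^{-}(K)$. The extra bookkeeping you supply (truncating the Dudley sum, passing from $c_{2k}$ to $c_m$ by monotonicity, and the $m=1$ term) is accurate but is treated as routine in the paper.
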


\begin{proof}
Let us verify the first claim, the second follows by duality. Indeed, this is immediate from Dudley's entropy estimate (\ref{eq:Dudley}) coupled with Carl's theorem (\ref{eq:carl2}):
\[
\sqrt{n} M^*(K) \ls C \sum_{k=1}^n \frac{1}{\sqrt{k}} e_k(K) \ls C' \sum_{k=1}^n \frac{1}{\sqrt{k}} c_k(K) .
\]
Obviously $c_k(K) \ls R$ for all $k$, and so the assertion follows from the estimates of Theorem \ref{thm:main-s}.
\qed
\end{proof}

Both Corollaries should be compared with the results of the previous section.

\begin{remark} \label{rem:alternative}\rm
It may be insightful to compare Theorem \ref{thm:main-s} to some other known estimates on diameters of $k$-codimensional sections, besides the Milman--Pisier Theorem \ref{thm:milman-pisier}. One sharp estimate is the Pajor--Tomczak-Jaegermann refinement \cite{PajorTomczakLowMStar} of V. Milman's low-$M^*$ estimate \cite{MilmanGeometricalInequalities}:
\begin{equation} \label{eq:low-M*}
c_k(L) \leq C \sqrt{\frac{n}{k}} M^*(L) ,
\end{equation}
for any origin-symmetric convex $L$ and $k = 1,\ldots,n$. However, for our application, we cannot use this to control $c_k(K^\circ)$ since we do not a-priori know $M^*(K^\circ) = M(K)$. A type of dual low-$M$ estimate was observed by Klartag \cite{Klartag-LowM}:
\[
c_k(L) \leq C^{\frac{n}{k}} \vrad(L)^{\frac{n}{k}} M(L)^{\frac{n-k}{k}} .
\]
Since $M(K^\circ) = M^*(K)$ is now well understood for an isotropic origin-symmetric convex body \cite{EMilman-2014}, this would give good estimates for low-dimensional sections (large codimension $k$), but unfortunately this is not enough for controlling $M(K)$. Klartag obtains the latter estimate from the following one, which is more in the spirit of the estimates we obtain in this work:
\[
c_k(L) \leq C^{\frac{n}{k}} \frac{\vrad(L)^{\frac{n}{k}}}{w_{n-k}(L)^{\frac{n-k}{k}}} .
\]
Again, this seems too rough for controlling the diameter of high-dimensional sections.
\end{remark}

\section{Preliminaries from asymptotic convex geometry}  \label{sec:prelim2}

An absolutely continuous Borel probability measure $\mu$ on  $\mathbb R^n$ is called
$\log$-concave if its density $f_\mu$ is of the form $\exp(-\varphi )$ with
$\varphi : \Real^n \rightarrow \Real \cup \set{+\infty}$ convex. Note that the uniform probability measure on $K$, denoted $\lambda_K$, is log-concave for any convex body $K$.

The barycenter of $\mu$ is denoted by ${\rm bar}(\mu ) :=\int_{\mathbb R^n}  x d\mu(x)$. The isotropic constant of $\mu$, denoted $L_\mu$, is the following affine invariant quantity:
\begin{equation}\label{definition-isotropic}
L_{\mu }:=(\sup_{x\in {\mathbb R}^n} f_{\mu} (x))^{\frac{1}{n}}  \det {\rm Cov}(\mu)^{\frac{1}{2n}},
\end{equation}
where ${\rm Cov}(\mu) := \int x \otimes x d\mu(x) - \int x d\mu(x) \otimes \int x d\mu(x)$ denotes the covariance matrix of $\mu$. We say
that a $\log $-concave probability measure $\mu $ on ${\mathbb R}^n$
is isotropic if ${\rm bar}(\mu )=0$ and ${\rm Cov}(\mu )$ is the identity matrix.
Note that a convex body $K$ of volume $1$ is isotropic if and only if the log-concave probability measure $\lambda_{K / L_K}$ is isotropic, and that $L_{\lambda_K}$ indeed coincides with $L_K$.
It was shown by K. Ball \cite{Ball-PhD,Ball-kdim-sections} that given $n \gr 1$:
\[
\sup_\mu {L_\mu} \ls C \sup_{K} L_K ,
\]
where the suprema are taken over all log-concave probability
measures $\mu$ and convex bodies $K$ in $\Real^n$, respectively (see
e.g. \cite{Klartag-2006} for the non-even case). Klartag's bound on the isotropic constant \cite{Klartag-2006} thus reads $L_\mu \ls C n^{1/4}$ for all
log-concave probability measures $\mu$ on $\Real^n$.

Given $E\in G_{n,k}$, we denote by $\pi_E\mu:= \mu \circ P_E^{-1}$
the push-forward of $\mu$ via $P_E$.
Obviously, if $\mu$ is centered or isotropic then so is $\pi_E \mu$,
and by the Pr\'{e}kopa--Leindler theorem, the same also holds for log-concavity.

Given a log-concave probability measure $\mu $ on ${\mathbb R}^n$ and $q\gr 1$,
the $L_q$-centroid body of $\mu$, denoted $Z_q(\mu )$, is the centrally-symmetric convex body with support
function:
\begin{equation}
h_{Z_q(\mu )}(y):= \left(\int_{{\mathbb R}^n} |\langle x,y\rangle|^{q}d\mu (x) \right)^{1/q}.
\end{equation}
Observe that $\mu $ is isotropic if and only if it is centered and
$Z_2(\mu )=B_2^n$. By Jensen's inequality
$Z_1(\mu )\subseteq Z_p(\mu )\subseteq Z_q(\mu )$ for all $1\ls p\ls
q<\infty $. Conversely, it follows from work of Berwald \cite{BerwaldMomentComparison} or by employing Borell's lemma (see \cite[Appendix III]{VMilman-Schechtman-book}), that:
\[ 1 \ls p \ls q \;\;\; \Longrightarrow \;\;\; Z_q(\mu )\subseteq C \frac{q}{p}Z_p(\mu ) .
\] When $\mu = \lambda_K$ is the uniform probability measure on a centrally-symmetric convex body $K$ in $\Real^n$, it is easy to check (e.g. \cite{BGVV-book-isotropic}) using the Brunn--Minkowski inequality that:
\[
c K \subseteq Z_n(\lambda_K) \subseteq K .
\]

Let $\mu$ denote an isotropic log-concave probability measure $\mu$ on $\Real^n$. It was shown by Paouris \cite{PaourisGAFA} that
\begin{equation}\label{eq:wZq-small}
1 \ls q \ls \sqrt{n} \;\;\; \Longrightarrow \;\;\; M^{\ast }\bigl(Z_q(\mu)\bigr)\simeq \sqrt{q} ,
\end{equation}
and that:
\begin{equation}
1 \ls q \ls n \;\;\; \Longrightarrow \;\;\; {\rm vrad}(Z_q(\mu)) \ls C \sqrt{q} .
\end{equation}
Conversely, it was shown by Klartag and E. Milman in \cite{Klartag-EMilman-2012} that:
\begin{equation}\label{eq:low-volume-Zq}
1 \ls q \ls \sqrt{n} \;\;\; \Longrightarrow \;\;\; {\rm vrad}(Z_q(\mu))\gr c_1\sqrt{q} .
\end{equation}
This determines the volume radius of $Z_q(\mu )$ for all $1 \ls q\ls\sqrt{n}$. For larger values of $q$ one can still use the lower bound:
\begin{equation}\label{eq:3}
1 \ls q \ls n \;\;\; \Longrightarrow \;\;\;  {\rm vrad}(Z_q(\mu)) \gr c_2\sqrt{q}\, L_{\mu}^{-1} ,
\end{equation}
obtained by Lutwak, Yang and Zhang \cite{Lutwak-Yang-Zhang-2000} via symmetrization.

\medskip

We refer to the book \cite{BGVV-book-isotropic} for further information on isotropic convex bodies and log-concave measures.

\section{$M$-estimates for isotropic convex bodies and their $L_q$-centroid bodies}  \label{sec:results-for-Zp} \label{sec:results}

Let $\mu$ denote an isotropic log-concave probability measure on $\Real^n$, and fix $H \in G_{n,k}$. A very useful observation is that:
\begin{equation*}
P_H\bigl(Z_q(\mu )\bigr) = Z_q\bigl(\pi_H(\mu )\bigr) .
\end{equation*}
It follows from \eqref{eq:low-volume-Zq} that:
\begin{equation}\label{eq:A2}
1 \ls q \ls \sqrt{k} \;\;\; \Longrightarrow \;\;\;  \vrad (P_H (Z_q(\mu ))) \gr c \sqrt{q} .
\end{equation}
Furthermore, using \eqref{eq:3}, we see that:
\begin{equation} \label{eq:A4}
q \ge \sqrt{k} \;\;\; \Longrightarrow \;\;\;  \vrad (P_H (Z_q(\mu ))) \gr c' \max \brac{\sqrt[4]{k},\frac{\sqrt{\min(q,k)}}{L_{\pi_H \mu}}} .
\end{equation}
Unfortunately, we can only say in general that $\sup\{L_{\pi_H \mu }:H \in G_{n,k}\}\ls C\sqrt[4]{k}$, and so the estimate (\ref{eq:A4}) is not very useful, unless we have some additional information on $\mu$. Recalling the definition of $v_k^{-}(Z_q(\mu ))$,  we summarize this (somewhat sloppily) in:

\begin{lemma}\label{lem:vk-Zq}
Let $\mu $ be an isotropic log-concave probability measure on ${\mathbb R}^n$. For any $q\gr 1$ and $k=1,\ldots,n$ we have:
\[
v_k^{-}(Z_q(\mu)) \gr c \sqrt{\min(q, \sqrt{k})}.
\]
Assuming that $\sup\{L_{\pi_H \mu }:H \in G_{n,k}\}\ls A_k$ we have:
\[
v_k^{-}(Z_q(\mu)) \gr \frac{c'}{A_k} \sqrt{\min(q, k)} .
\]
\end{lemma}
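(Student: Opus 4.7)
\noindent\textit{Proof proposal.} The lemma is essentially a bookkeeping exercise assembling the estimates \eqref{eq:A2} and \eqref{eq:A4} that have just been recorded; the plan is to unwind the definition of $v_k^{-}$ and apply these two bounds in the appropriate range of $q$.

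First I would recall the key identity $P_H(Z_q(\mu)) = Z_q(\pi_H \mu)$, valid for every $H \in G_{n,k}$, together with the fact that $\pi_H\mu$ is again an isotropic log-concave probability measure (here on $H \cong \Real^k$). Consequently, all of the results on $L_q$-centroid bodies of isotropic log-concave measures recalled in Section~\ref{sec:prelim2} apply to $P_H(Z_q(\mu))$ with $n$ replaced by $k$; in particular, Klartag--E.\ Milman's lower bound \eqref{eq:low-volume-Zq} yields \eqref{eq:A2}, and the Lutwak--Yang--Zhang bound \eqref{eq:3} (combined with the universal bound $L_{\pi_H\mu} \ls C\sqrt[4]{k}$) yields \eqref{eq:A4}.

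For the first inequality I would split according to whether $q \ls \sqrt{k}$ or $q \gr \sqrt{k}$. In the first range, $\min(q,\sqrt{k}) = q$, and \eqref{eq:A2} gives $\vrad(P_H(Z_q(\mu))) \gr c\sqrt{q}$ for every $H \in G_{n,k}$, so taking the infimum over $H$ produces $v_k^{-}(Z_q(\mu)) \gr c\sqrt{\min(q,\sqrt{k})}$. In the second range, $\min(q,\sqrt{k}) = \sqrt{k}$, and the first term in the maximum in \eqref{eq:A4} gives $\vrad(P_H(Z_q(\mu))) \gr c'\sqrt[4]{k} = c'\sqrt{\min(q,\sqrt{k})}$, again uniformly in $H$. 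Combining the two cases yields the claimed unconditional bound.

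For the second, conditional, inequality I would again split at $q = \sqrt{k}$. When $q \gr \sqrt{k}$, the second term in the maximum in \eqref{eq:A4} becomes usable: under the hypothesis $\sup_H L_{\pi_H\mu} \ls A_k$ it gives $\vrad(P_H(Z_q(\mu))) \gr (c'/A_k)\sqrt{\min(q,k)}$ for every $H \in G_{n,k}$, which is the desired bound. When $q \ls \sqrt{k}$ we have $\min(q,k) = q$, and \eqref{eq:A2} already gives $\vrad(P_H(Z_q(\mu))) \gr c\sqrt{q}$; since $A_k$ is automatically bounded below by a universal positive constant (there is a lower bound $L_\nu \gr c_0 > 0$ for every isotropic log-concave $\nu$), this bound is at least $(c''/A_k)\sqrt{\min(q,k)}$, and taking the infimum over $H$ concludes the proof. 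The only point that requires any thought is the last one, namely that the trivial lower bound on isotropic constants lets the two cases merge into a single estimate with the factor $1/A_k$; otherwise the argument is a direct concatenation of the inputs already displayed.
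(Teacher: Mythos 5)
Your proposal is correct and follows exactly the route the paper intends: the lemma is stated as a summary of \eqref{eq:A2} and \eqref{eq:A4} (the paper even flags this as ``somewhat sloppy''), and your case split at $q=\sqrt{k}$ together with the infimum over $H\in G_{n,k}$ is precisely that argument. Your extra observation that the universal lower bound $L_\nu \gr c_0>0$ is needed to absorb the range $q\ls\sqrt{k}$ into the $1/A_k$ estimate is a correct filling-in of the detail the paper elides.
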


\subsection{Estimates for $Z_q(\mu)$}

Plugging these lower bounds for $v_k^{-}(Z_q(\mu))$ into either Theorem \ref{thm:covering1} or Corollary \ref{cor:covering2}
coupled with Remark \ref{rem:duality}, we immediately obtain estimates on the entropy numbers $e_k(B_2^n,Z_q(\mu ))$. Similar estimates on the maximal
(with respect to $F\in G_{n,n-k}$) in-radius of $P_F(Z_q(\mu))$
are obtained by invoking Theorem \ref{thm:main-s}.

\begin{theorem}\label{thm:Rqk-variant}
Given $q \gr 2$ and an integer $k = 1,\ldots,n$, denote:
\[
R_{k,q} := \min\set{ 1, C \frac{1}{\min(\sqrt{q} , \sqrt[4]{k})} \frac{n}{k} \log\left( e+ \frac{n}{k}\right) } .
\]
Then, for any isotropic log-concave probability measure $\mu$ on $\Real^n$:
\[
e_k(B_2^n,Z_q(\mu)) \ls R_{k,q} ,
\]
and there exists $F \in G_{n,n-k}$ so that:
\[
P_F (Z_q(\mu)) \supseteq \frac{1}{R_{k,q}} B_F .
\]
\end{theorem}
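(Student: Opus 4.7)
The plan is to read both estimates directly off from the tools of Sections \ref{sec:covering}--\ref{sec:diam}, applied to $K = Z_q(\mu)$, combined with Lemma \ref{lem:vk-Zq}. A key preliminary observation is that since $\mu$ is isotropic, $Z_2(\mu) = B_2^n$, and hence $Z_q(\mu) \supseteq B_2^n$ for all $q \gr 2$. This immediately yields the trivial bounds $e_k(B_2^n, Z_q(\mu)) \ls 1$ and $P_F(Z_q(\mu)) \supseteq B_F$, which account for the ``$1$'' in the definition of $R_{k,q}$. I will also use the elementary identity $\sqrt{\min(q, \sqrt{m})} = \min(\sqrt{q}, \sqrt[4]{m})$, so that Lemma \ref{lem:vk-Zq} reads $v_m^{-}(Z_q(\mu)) \gr c \min(\sqrt{q}, \sqrt[4]{m})$.

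For the inclusion statement, I would apply the dual estimate \eqref{eq:PF-estimate} of Theorem \ref{thm:main-s} with $\lceil k/2 \rceil$ in place of $k$. This produces a subspace $F_0 \in G_{n, n - 2\lceil k/2 \rceil}$ with
\[
P_{F_0}(Z_q(\mu)) \supseteq \frac{c\,v_{\lceil k/2 \rceil}^{-}(Z_q(\mu))}{(n/k)\log(e + n/k)} B_{F_0} \gr \frac{c'\min(\sqrt{q}, \sqrt[4]{k})}{(n/k)\log(e + n/k)} B_{F_0} \gr \frac{c''}{R_{k,q}} B_{F_0}.
\]
When $k$ is odd so that $\dim F_0 = n-k+1$, I would pass to any $F \subseteq F_0$ with $\dim F = n-k$ and use $P_F(Z_q(\mu)) = P_F(P_{F_0}(Z_q(\mu))) \supseteq (c''/R_{k,q}) B_F$; the extreme case $k=n$ is vacuous since $G_{n,0} = \set{\set{0}}$.

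For the entropy bound, I would invoke the dual covering estimate \eqref{eq:remark-3-3} of Remark \ref{rem:duality}:
\[
e_k(B_2^n, Z_q(\mu)) \ls C\frac{n}{k}\log\Big(e+\frac{n}{k}\Big)\sup_{1\ls m\ls k}\set{2^{-k/(3m)}/v_m^{-}(Z_q(\mu))},
\]
and plug in the lower bound on $v_m^{-}$. The remaining task is to bound $\sup_{1\ls m\ls k} g(m)$ where $g(m) := 2^{-k/(3m)}/\min(\sqrt{q}, m^{1/4})$. On the range $m \ls \min(q^2,k)$ one has $g(m) = 2^{-k/(3m)} m^{-1/4}$, whose logarithmic derivative $\frac{k\ln 2}{3m^2} - \frac{1}{4m}$ is positive for all $m \ls k$; on the complementary range $q^2 \ls m \ls k$ (if non-empty) one has $g(m) = 2^{-k/(3m)}/\sqrt{q}$, which is visibly increasing. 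Hence $\sup g$ is attained at $m = k$, with value of order $1/\min(\sqrt{q},\sqrt[4]{k})$. Combined with the trivial $e_k(B_2^n, Z_q(\mu)) \ls 1$, this gives the required bound by $R_{k,q}$. The only mild obstacle is the routine monotonicity/supremum analysis; no new geometric input is needed beyond Theorem \ref{thm:main-s} and Lemma \ref{lem:vk-Zq}.
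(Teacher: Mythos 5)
Your proof is correct and follows essentially the same route as the paper's: the trivial bounds coming from $Z_q(\mu)\supseteq Z_2(\mu)=B_2^n$, the dual covering estimate \eqref{eq:remark-3-3} combined with Lemma \ref{lem:vk-Zq} and a supremum analysis for the entropy bound, and Theorem \ref{thm:main-s} for the projection statement. (One harmless nit: the logarithmic derivative $\frac{k\ln 2}{3m^2}-\frac{1}{4m}$ changes sign at $m=\frac{4\ln 2}{3}k\approx 0.92\,k$ rather than remaining positive up to $m=k$, but the supremum is still attained at $m\simeq k$ and is comparable to $1/\min(\sqrt{q},\sqrt[4]{k})$, so the conclusion stands.)
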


\begin{proof}
From \eqref{eq:remark-3-3} and Lemma \ref{lem:vk-Zq} we have:
\[
e_k(B_2^n,Z_q(\mu )) \ls C  \frac{n}{k} \log \left(e+\frac{n}{k}\right) \sup_{1 \ls m \ls k} \set{ 2^{-\frac{k}{3m}} \frac{1}{\min (\sqrt{q},\sqrt[4]{m})} } .
\]
Then, it suffices to observe that:
\begin{align*}\sup_{1 \ls m \ls k} \set{ 2^{-\frac{k}{3m}} \frac{1}{\min (\sqrt{q},\sqrt[4]{m})}}
&\simeq \sup_{1 \ls m \ls k} \set{ 2^{-\frac{k}{3m}} \left (\frac{1}{\sqrt{q}}+\frac{1}{\sqrt[4]{m}}\right )}\\
&\ls C\left (\frac{1}{\sqrt{q}}+\frac{1}{\sqrt[4]{k}}\right )\simeq \frac{1}{\min (\sqrt{q},\sqrt[4]{k})},
\end{align*}
because $2^{-\frac{k}{3m}}/\sqrt{q}\ls 1/\sqrt{q}$ for all $1\ls m\ls k$, and $m\mapsto 2^{\frac{k}{3m}}\sqrt[4]{m}$ attains its minimum at $m \simeq k$, 
so that $\sup_{1\ls m\ls k}(2^{-\frac{k}{3m}}/\sqrt[4]{m})\ls C/\sqrt[4]{k}$.
We also use the fact that in a certain range of values for $q \gr 2$ and $k \gr 1$, we might as well use the trivial estimates:
\begin{equation} \label{eq:trivial}
e_k(B_2^n,Z_q(\mu)) \ls 1 ~,~ P_F (Z_q(\mu)) \supseteq B_F ,
\end{equation}
which hold since $Z_q(\mu) \supseteq Z_2(\mu) = B_2^n$. 
\qed
\end{proof}

An elementary computation based on Corollary \ref{cor:M} then yields a non-trivial estimate for $M(Z_q(\mu ))$. It is interesting to note that without using the trivial information that $Z_q(\mu) \supseteq B_2^n$ (or equivalently, the trivial estimates in (\ref{eq:trivial})), Corollary \ref{cor:M} would not yield anything meaningful.

\begin{theorem}\label{thm:M-Zq}
For any isotropic log-concave probability measure $\mu$ on $\Real^n$:
\begin{equation*}
2\ls q\ls q_0 := (n\log (e+n))^{2/5} \;\;\; \Longrightarrow \;\;\;  M(Z_q(\mu)) \ls C\frac{\sqrt{\log q}}{\sqrt[4]{q}} .
\end{equation*}
\end{theorem}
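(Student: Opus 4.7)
The plan is to apply the dual form of Corollary \ref{cor:M} directly to $K = Z_q(\mu)$. Since $Z_q(\mu) \supseteq Z_2(\mu) = B_2^n$, we may take $r=1$, which yields
\[
\sqrt{n}\, M(Z_q(\mu)) \ls C \sum_{k=1}^{n} \frac{1}{\sqrt{k}} \min\!\left(1,\; \frac{n}{k}\log\!\Big(e+\frac{n}{k}\Big) \frac{1}{v_{k}^{-}(Z_q(\mu))}\right).
\]
I then substitute the lower bound $v_k^{-}(Z_q(\mu)) \gr c \sqrt{\min(q,\sqrt{k})}$ from Lemma \ref{lem:vk-Zq}. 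In the range $q \ls q_0$ that we care about, the effective regime will be $k \gr q^2$, where Lemma \ref{lem:vk-Zq} gives $v_k^{-} \gr c\sqrt{q}$, so the second argument of the minimum simplifies to $\frac{Cn}{k\sqrt{q}}\log(e+n/k)$.

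The crux is to split the sum at a cutoff $k^\ast$ chosen so that the two terms inside the minimum balance, i.e.\ $k^\ast \simeq n\log(e+n/k^\ast)/\sqrt{q}$. Solving this transcendental relation self-consistently gives $k^\ast \simeq n \log q/\sqrt{q}$ (since then $n/k^\ast \simeq \sqrt{q}/\log q$ forces the logarithm to be $\simeq \log q$ for $q$ not too small). The hypothesis $q \ls q_0 = (n\log(e+n))^{2/5}$ is precisely what ensures $k^\ast \gr q^2$ (putting us in the branch where $v_k^{-} \gr c\sqrt{q}$ is effective) and $k^\ast \ls n$. Small values of $q$ (say $q$ bounded by an absolute constant) are handled separately by the trivial bound $M(Z_q(\mu)) \ls M(B_2^n) = 1$, which is already $\ls C \sqrt{\log q}/\sqrt[4]{q}$ after adjusting constants.

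For $1 \ls k \ls k^\ast$, bound the minimum by $1$, giving $\sum_{k=1}^{k^\ast} \frac{1}{\sqrt{k}} \ls 2\sqrt{k^\ast}$. For $k > k^\ast$, use the second argument together with monotonicity of $k\mapsto \log(e+n/k)$:
\[
\sum_{k > k^\ast} \frac{1}{\sqrt{k}}\cdot \frac{Cn\log(e+n/k)}{k\sqrt{q}}
\ls \frac{Cn\log(e+n/k^\ast)}{\sqrt{q}} \sum_{k > k^\ast} k^{-3/2}
\ls \frac{C'n\log(e+n/k^\ast)}{\sqrt{q}\,\sqrt{k^\ast}} = C'\sqrt{k^\ast},
\]
using the defining relation $n\log(e+n/k^\ast)/\sqrt{q} = k^\ast$ in the last step. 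Combining both pieces gives $\sqrt{n}\, M(Z_q(\mu)) \ls C\sqrt{k^\ast} \simeq \sqrt{n\log q/\sqrt{q}}$, which is exactly the advertised bound after dividing by $\sqrt{n}$.

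The main obstacle is the bookkeeping around the cutoff: one must verify self-consistently that the logarithm evaluated at $k^\ast$ is $\simeq \log q$ rather than $\log n$ (otherwise the exponent $2/5$ on the log in $q_0$ would not match), and simultaneously check that the constraint $q \ls q_0$ keeps us in the regime $q^2 \ls k^\ast \ls n$ where Lemma \ref{lem:vk-Zq} provides its cleaner $\sqrt{q}$ lower bound on $v_k^{-}(Z_q(\mu))$. Once this is arranged, the estimate is a routine splitting/summation argument fed into the general $M$-bound of Corollary \ref{cor:M}.
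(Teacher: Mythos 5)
Your proposal is correct and follows essentially the same route as the paper: Corollary \ref{cor:M} with $r=1$ (using $Z_q(\mu)\supseteq Z_2(\mu)=B_2^n$), the lower bound $v_k^{-}(Z_q(\mu))\gr c\sqrt{\min(q,\sqrt{k})}$ from Lemma \ref{lem:vk-Zq}, the same cutoff $k^\ast\simeq n\log q/\sqrt{q}$ (which the constraint $q\ls q_0$ keeps in the regime $k^\ast\gr cq^2$), and the same split-and-sum computation. The self-consistency check that $\log(e+n/k^\ast)\simeq\log q$ is exactly the point the paper also relies on, so no further comment is needed.
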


\begin{proof}
We use the estimate:
\[ \sqrt{n}M(Z_q(\mu ))\ls C\sum_{k=1}^n\frac{1}{\sqrt{k}}\min\set{ 1, C\frac{1}{\min(\sqrt{q} , \sqrt[4]{k})} \frac{n}{k} \log\left( e+ \frac{n}{k}\right) } ,
\]
which follows from Corollary \ref{cor:M} combined with Theorem \ref{thm:Rqk-variant}.
We set $k(n,q)=(n\log q)/\sqrt{q}$. Note that if $k\gr k(n,q)$ then $k\gr c q^2$. Therefore,
we may write:
\begin{align*}
\sqrt{n}M(Z_q(\mu )) &\ls C\sum_{k=1}^{k(n,q)}\frac{1}{\sqrt{k}}+\frac{Cn}{\sqrt{q}}\sum_{k=k(n,q)}^n\frac{1}{k^{3/2}}\log\left( e+ \frac{n}{k}\right)\\
&\ls C_1\sqrt{k(n,q)}+C_2\frac{n\log q}{\sqrt{q k(n,q)}}\ls C_3\frac{\sqrt{n\log q}}{\sqrt[4]{q}}.
\end{align*}
The result follows. 
\qed
\end{proof}

For larger values of $q$, we obtain no additional information beyond the trivial monotonicity:
\[
 q_0 \ls q \;\;\; \Longrightarrow \;\;\;  M(Z_q(\mu)) \ls M(Z_{q_0}(\mu)) \ls C \frac{\log^{2/5} (e+n)}{n^{1/10}}.
\]

If $K$ is an isotropic centrally-symmetric convex body in $\Real^n$, using that $\lambda_{K/L_K}$ is isotropic log-concave and that $Z_n(\lambda_{K/L_K})$ is isomorphic to $K/L_K$, one immediately translates the above results to corresponding estimates for $K$.

\begin{theorem}
Given $k = 1,\ldots,n$, set:
\[
R_{k} := \min \set{ 1, C \frac{1}{\sqrt[4]{k}} \frac{n}{k} \log\left( e+ \frac{n}{k}\right) } .
\]
Then, for any isotropic centrally-symmetric convex body $K$ in $\Real^n$:
\[
e_k(B_2^n,K) \ls \frac{R_{k}}{L_K} ,
\]
and there exists $F \in G_{n,n-k}$ so that:
\[
P_F(K) \supseteq \frac{L_K}{R_{k}} B_F .
\]
Moreover:
\[
M(K)\ls \frac{C}{L_K} \frac{\log^{2/5} (e+n)}{n^{1/10}} .
\]
\end{theorem}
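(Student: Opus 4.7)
The strategy is to reduce all three estimates to the corresponding statements for the $L_q$-centroid bodies, applied to the isotropic log-concave measure $\mu := \lambda_{K/L_K}$. The crucial link is the relation $K \simeq L_K \cdot Z_n(\mu)$, more precisely $c L_K Z_n(\mu) \subseteq K \subseteq L_K Z_n(\mu)$, which combines the inclusions $cK \subseteq Z_n(\lambda_K) \subseteq K$ recorded in Section \ref{sec:prelim2} with the scaling identity $Z_q(\lambda_{K/L_K}) = Z_q(\lambda_K)/L_K$ obtained by a direct change of variables in the formula for $h_{Z_q}$.

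For the entropy and projection bullets, I would feed $\mu$ into Theorem \ref{thm:Rqk-variant} with the choice $q = n$. Since $k \ls n$, we have $\min(\sqrt{n}, \sqrt[4]{k}) = \sqrt[4]{k}$, so $R_{k,n}$ coincides (up to universal constants) with the $q$-free quantity $R_k$ appearing in the statement. The inclusion $K \supseteq c L_K Z_n(\mu)$ then yields
\[
e_k(B_2^n, K) \ls e_k(B_2^n, c L_K Z_n(\mu)) = \frac{1}{c L_K} e_k(B_2^n, Z_n(\mu)) \ls \frac{R_k}{L_K}.
\]
Similarly, Theorem \ref{thm:Rqk-variant} produces $F \in G_{n,n-k}$ with $P_F(Z_n(\mu)) \supseteq \frac{1}{R_{k,n}} B_F$, and projecting the same inclusion gives $P_F(K) \supseteq (c L_K / R_k) B_F$, as required.

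For the $M(K)$ estimate, I would invoke the identity $M(K) \simeq M(Z_n(\mu))/L_K$ already recorded in the introduction. By the monotonicity $Z_n(\mu) \supseteq Z_{q_0}(\mu)$ with $q_0 := (n\log(e+n))^{2/5}$, one has $M(Z_n(\mu)) \ls M(Z_{q_0}(\mu))$, and Theorem \ref{thm:M-Zq} supplies $M(Z_{q_0}(\mu)) \ls C\sqrt{\log q_0}/\sqrt[4]{q_0}$. Since $\log q_0 \simeq \log(e+n)$ and $\sqrt[4]{q_0} = (n\log(e+n))^{1/10}$, this simplifies to $C \log^{2/5}(e+n)/n^{1/10}$, giving the asserted bound on $M(K)$.

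The argument is essentially a clean translation once Theorems \ref{thm:Rqk-variant} and \ref{thm:M-Zq} are in hand, so no substantive obstacle arises. The only delicate point is the elementary verification that the parameter $\min(\sqrt{q}, \sqrt[4]{k})$ in $R_{k,q}$ collapses to $\sqrt[4]{k}$ when $q = n$ and $k \ls n$, which is precisely what permits us to replace $R_{k,n}$ by the $q$-independent $R_k$ appearing in the theorem statement.
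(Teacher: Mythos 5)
Your proposal is correct and is precisely the argument the paper intends (the paper merely says one "immediately translates" Theorems \ref{thm:Rqk-variant} and \ref{thm:M-Zq} via the isomorphism $Z_n(\lambda_{K/L_K})\simeq K/L_K$); your verification that $\min(\sqrt{n},\sqrt[4]{k})=\sqrt[4]{k}$ and the computation $\sqrt{\log q_0}/\sqrt[4]{q_0}\simeq \log^{2/5}(e+n)/n^{1/10}$ are exactly the needed details. The only blemish is the direction of the constants in "$cL_K Z_n(\mu)\subseteq K\subseteq L_K Z_n(\mu)$" (it should be $L_K Z_n(\mu)\subseteq K\subseteq c^{-1}L_K Z_n(\mu)$), which is immaterial since you only use $K\supseteq cL_K Z_n(\mu)$.
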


\subsection{Assuming that the isotropic constant is bounded}

It is interesting to perform the same calculations under the assumption
that $L_\mu \ls C$ for any log-concave probability measure $\mu$ (regardless of dimension). In that case:
\[
v_k^{-}(Z_q(\mu)) \gr c \sqrt{\min(q, k)}.
\]
This would yield the following conditional result:

\begin{theorem}
Given $q \gr 2$ and an integer $k = 1,\ldots,n$, denote:
\[
R_{k,q} := \min \set{ 1, C \frac{1}{\sqrt{\min(q,k)}} \frac{n}{k} \log\left( e+ \frac{n}{k}\right) } .
\]
Assuming that $L_\mu \ls C$ for any log-concave probability measure {\rm (}regardless of dimension{\rm )},
then for any isotropic log-concave probability measure $\mu$ on $\Real^n$:
\[
e_k(B_2^n,Z_q(\mu)) \ls R_{k,q} ,
\]
and there exists $F \in G_{n,n-k}$ so that:
\[
P_F (Z_q(\mu)) \supseteq \frac{1}{R_{k,q}} B_F .
\]
Furthermore:
\[
M(Z_q(\mu)) \ls C\frac{\sqrt{\log q}}{\sqrt[4]{q}}\;\;\hbox{for all}\;\; 2\ls q\ls (n\log n)^{2/3} .
\]
Consequently, for every isotropic convex body $K$ in ${\mathbb R}^n$ one would have:
\[
M(K)\ls C \frac{\log^{1/3} (e+ n)}{n^{1/6}} .
\]
\end{theorem}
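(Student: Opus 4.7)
The plan is to follow the blueprint of Theorem \ref{thm:Rqk-variant} and Theorem \ref{thm:M-Zq}, feeding in the improved lower bound on $v_k^{-}(Z_q(\mu))$ that becomes available under the hypothesis $L_\mu \ls C$ for every log-concave probability measure $\mu$. Since the hypothesis is dimension-free, for any $H \in G_{n,k}$ the marginal $\pi_H \mu$ also satisfies $L_{\pi_H \mu} \ls C$, so Lemma \ref{lem:vk-Zq} may be applied with $A_k = C$ to obtain $v_k^{-}(Z_q(\mu)) \gr c \sqrt{\min(q,k)}$ for every $1 \ls k \ls n$ and $q \gr 1$. The effect is to upgrade the factor $\min(\sqrt{q},\sqrt[4]{m})$ appearing in the proof of Theorem \ref{thm:Rqk-variant} to $\sqrt{\min(q,m)}$.

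For the covering and projection claims, I would plug this improvement into (\ref{eq:remark-3-3}) (via Remark \ref{rem:duality}) and into Theorem \ref{thm:main-s}. The supremum $\sup_{1 \ls m \ls k} 2^{-k/(3m)}/\sqrt{\min(q,m)}$ is handled exactly as in Theorem \ref{thm:Rqk-variant}: splitting $1/\sqrt{\min(q,m)} \simeq 1/\sqrt{q} + 1/\sqrt{m}$, the first piece is controlled by $1/\sqrt{q}$ and the second, whose maximum is attained near $m \simeq k$, by $C/\sqrt{k}$; altogether this gives $C/\sqrt{\min(q,k)}$. Combining with the trivial bounds $e_k(B_2^n,Z_q(\mu)) \ls 1$ and $P_F(Z_q(\mu)) \supseteq B_F$, both of which follow from $Z_q(\mu) \supseteq Z_2(\mu) = B_2^n$, produces the asserted estimate by $R_{k,q}$.

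For the mean-norm estimate, I would insert the new $R_{k,q}$ into the dual form of Corollary \ref{cor:M} and split the sum at $k_0 := n \log q/\sqrt{q}$, exactly as in the proof of Theorem \ref{thm:M-Zq}. The head sum $\sum_{k \ls k_0} k^{-1/2}$ contributes $\ls C\sqrt{k_0} = C\sqrt{n \log q}/q^{1/4}$. The crucial difference from Theorem \ref{thm:M-Zq} is that under the bounded isotropic-constant hypothesis one only needs $k_0 \gr q$ (rather than $k_0 \gr q^2$) in order to ensure $\min(q,k) = q$ throughout the tail $k \gr k_0$; this amounts to $q^{3/2} \ls n \log q$, which holds precisely on the enlarged range $q \ls (n \log n)^{2/3}$. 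The tail then satisfies
\[
\frac{Cn}{\sqrt{q}} \sum_{k \gr k_0} \frac{\log(e + n/k)}{k^{3/2}} \ls \frac{Cn \log(e + n/k_0)}{\sqrt{q \, k_0}} \ls \frac{C \sqrt{n \log q}}{q^{1/4}},
\]
using $n/k_0 = \sqrt{q}/\log q$ so that $\log(e+n/k_0) \ls C\log q$. Dividing by $\sqrt{n}$ yields $M(Z_q(\mu)) \ls C \sqrt{\log q}/\sqrt[4]{q}$ on the asserted range.

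For the isotropic convex body estimate, I would apply the above with $\mu = \lambda_{K/L_K}$ and $q_0 := (n \log n)^{2/3}$, invoking the identity $M(K) \simeq M(Z_n(\lambda_{K/L_K}))/L_K$ together with the monotonicity $Z_n(\mu) \supseteq Z_{q_0}(\mu)$. Since $\sqrt{\log q_0}/q_0^{1/4} \simeq \log^{1/3}(e+n)/n^{1/6}$, and the factor $1/L_K$ is absorbed by the universal lower bound $L_K \gr c$, the claim follows. The main technical point is the balancing at $k_0$ and verifying that $k_0 \gr q$ holds on the enlarged range $q \ls (n\log n)^{2/3}$, which is precisely where the improvement over Theorem \ref{thm:M-Zq} originates.
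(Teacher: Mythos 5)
Your proposal is correct and follows essentially the same route the paper intends: the hypothesis $L_\mu \ls C$ is dimension-free, so $A_k = C$ in Lemma \ref{lem:vk-Zq} gives $v_k^{-}(Z_q(\mu)) \gr c\sqrt{\min(q,k)}$, and the rest is the computation of Theorems \ref{thm:Rqk-variant} and \ref{thm:M-Zq} with $\min(\sqrt q, \sqrt[4]{k})$ upgraded to $\sqrt{\min(q,k)}$. You correctly isolate the one point where the improvement enters — needing only $k_0 \gr q$ rather than $k_0 \gr cq^2$ for the tail, which is exactly what extends the range to $q \ls (n\log n)^{2/3}$ — and the balancing at $k_0 = n\log q/\sqrt q$ and the final substitution $q_0 = (n\log n)^{2/3}$ are carried out correctly.
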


\subsection{$\psi_\alpha$--measures}

Finally, rather than assuming that $L_\mu$ is always bounded, we repeat the calculations for a log-concave measure $\mu$ which is assumed to be $\psi_\alpha$-regular. Recall that $\mu$ is called $\psi_\alpha$ with constant $b_\alpha$ ($\alpha \in [1,2]$) if:
\[
 Z_q(\mu) \subseteq b_\alpha q^{1/\alpha} Z_2(\mu)\;\;\; \hbox{for all}\; q \gr 2 .
\]
Note that this property is inherited by all marginals of $\mu$, and that any log-concave measure is $\psi_1$ with $b_1 = C$ a universal constant.

It was shown by Klartag and E. Milman \cite{Klartag-EMilman-2012} that when $\mu$ is a $\psi_\alpha$ log-concave probability measure on $\Real^n$ with constant $b_\alpha$, then:
\[
1 \ls q \ls C \frac{n^{\frac{\alpha}{2}}}{b_\alpha^\alpha} \;\;\; \Longrightarrow \;\;\;  \vrad (Z_q(\mu )) \gr c \sqrt{q} ,
\]
and:
\[
L_{\mu} \ls C \sqrt{ b_\alpha^\alpha n^{1 - \alpha/2}} .
\]
This implies that for such a measure, for any $H \in G_{n,k}$:
\[
1 \ls q \ls C \frac{k^{\frac{\alpha}{2}}}{b_\alpha^\alpha} \;\;\; \Longrightarrow \;\;\;  \vrad (P_H(Z_q(\mu ))) \gr c \sqrt{q} .
\]
By \eqref{eq:3}, we know that:
\begin{equation} \label{eq:LYZ}
q \ge q_0 := C \frac{k^{\frac{\alpha}{2}}}{b_\alpha^\alpha} \;\;\; \Longrightarrow \;\;\;  \vrad (P_H (Z_q(\mu ))) \gr c' \max \brac{\sqrt{q_0},\frac{\sqrt{\min(q,k)}}{L_{\pi_H \mu}}} .
\end{equation}
Unfortunately, since we only know that:
\[
 L_{\pi_H \mu} \ls C \sqrt{ b_\alpha^\alpha k^{1-\alpha/2}} ,
\]
we again see that the maximum in (\ref{eq:LYZ}) is always attained by the $\sqrt{q_0}$ term. Summarizing, we have:

\begin{lemma}
Let $\mu $ be an isotropic log-concave probability measure on ${\mathbb R}^n$ which
is $\psi_\alpha$ with constant $b_\alpha$ for some $\alpha \in [1,2]$. Then for any $q\gr 1$ and $k=1,\ldots,n$ we have:
\[
v_k^{-}(Z_q(\mu)) \gr c \sqrt{\min \brac{q,\frac{k^{\alpha/2}}{b_\alpha^\alpha}}} .
\]
\end{lemma}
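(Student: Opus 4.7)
The plan is to reduce the claim to the Klartag--E. Milman volume-radius bound for $\psi_\alpha$ measures (quoted in the excerpt), applied marginal-by-marginal, combined with the elementary monotonicity $Z_q \supseteq Z_p$ for $q \gr p$.

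First I would fix $H \in G_{n,k}$ and use the commutation identity $P_H(Z_q(\mu )) = Z_q(\pi_H \mu )$ noted in Section~\ref{sec:results}. Since the $\psi_\alpha$ property with constant $b_\alpha$ is inherited by every marginal, $\pi_H \mu$ is an isotropic $\psi_\alpha$ log-concave probability measure on $H \cong \Real^k$ with the same constant. Set $q_0 := c \, k^{\alpha/2}/b_\alpha^\alpha$, where $c$ is the constant from the Klartag--E. Milman theorem quoted just before the lemma. In the regime $1 \ls q \ls q_0$, that theorem applied to $\pi_H \mu$ directly yields $\vrad(Z_q(\pi_H \mu )) \gr c' \sqrt{q}$. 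Taking the infimum over $H \in G_{n,k}$ gives $v_k^{-}(Z_q(\mu )) \gr c'\sqrt{q}$, which matches the claim since $\min(q, k^{\alpha/2}/b_\alpha^\alpha) \simeq q$ in this range.

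For $q > q_0$, I would invoke the trivial monotonicity $Z_q(\mu ) \supseteq Z_{q_0}(\mu )$ (a consequence of Jensen's inequality on the integral defining $h_{Z_q(\mu )}$). Then $P_H(Z_q(\mu )) \supseteq P_H(Z_{q_0}(\mu ))$ for every $H \in G_{n,k}$, and applying the previous case at parameter $q_0$ yields
\[
v_k^{-}(Z_q(\mu )) \gr v_k^{-}(Z_{q_0}(\mu )) \gr c'' \sqrt{q_0} \simeq \sqrt{k^{\alpha/2}/b_\alpha^\alpha} ,
\]
which again matches the claim since now $\min(q, k^{\alpha/2}/b_\alpha^\alpha) \simeq k^{\alpha/2}/b_\alpha^\alpha$. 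Combining the two regimes and absorbing constants completes the proof.

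There is essentially no real obstacle here: the central input, namely the Klartag--E.~Milman lower bound $\vrad(Z_q(\nu )) \gr c\sqrt{q}$ for $1 \ls q \ls C k^{\alpha/2}/b_\alpha^\alpha$ whenever $\nu$ is $\psi_\alpha$ with constant $b_\alpha$ on $\Real^k$, is recorded verbatim in the excerpt, and the inheritance of the $\psi_\alpha$ property under marginalization is standard. The closest thing to a subtlety is verifying that the Lutwak--Yang--Zhang term in \eqref{eq:LYZ} never improves over the $\sqrt{q_0}$ term, but the discussion preceding the lemma already observes that the crude bound $L_{\pi_H \mu} \ls C\sqrt{b_\alpha^\alpha k^{1-\alpha/2}}$ makes the second term of \eqref{eq:LYZ} smaller than $\sqrt{q_0}$; my approach bypasses \eqref{eq:LYZ} altogether by using monotonicity in $q$ directly.
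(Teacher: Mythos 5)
Your proposal is correct and follows essentially the same route as the paper: the Klartag--Milman volume-radius bound applied to the marginals $\pi_H\mu$ (which inherit isotropicity and the $\psi_\alpha$ property) for $q$ up to $q_0 \simeq k^{\alpha/2}/b_\alpha^{\alpha}$, and monotonicity of $q \mapsto Z_q(\mu)$ beyond that. The paper's $\max(\sqrt{q_0},\cdot)$ in \eqref{eq:LYZ} is exactly this monotonicity term, and the paper itself observes that the Lutwak--Yang--Zhang term never wins, so your decision to bypass \eqref{eq:LYZ} changes nothing of substance.
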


Plugging this estimate into the general results of Sections \ref{sec:covering} and \ref{sec:diam}, we obtain:
\begin{theorem}
Let $\mu$ denote an isotropic log-concave probability measure on $\Real^n$ which is $\psi_\alpha$ with constant $b_\alpha$
for some $\alpha \in [1,2]$. Given $q \gr 2$ and an integer $k = 1,\ldots,n$, denote:
\[
R_{k,q} := \min \set{ 1, C \frac{1}{\sqrt{\min \brac{q,\frac{k^{\alpha/2}}{b_\alpha^\alpha}}}} \frac{n}{k} \log\left( e+ \frac{n}{k}\right) } .
\]
Then:
\[
e_k(B_2^n,Z_q(\mu)) \ls R_{k,q} ,
\]
and there exists $F \in G_{n,n-k}$ so that:
\[
P_F (Z_q(\mu)) \supseteq \frac{1}{R_{k,q}} B_F .
\]
Furthermore:
\[
M(Z_q(\mu)) \ls C\frac{\sqrt{\log q}}{\sqrt[4]{q}}\;\;\hbox{for all}\;\; 2\ls q\ls c \frac{(n \log (e+n))^{\frac{2\alpha }{\alpha +4}}}{b_\alpha^\frac{4 \alpha}{\alpha+4}} .
\]
Consequently, for every isotropic convex body $K$ in ${\mathbb R}^n$ so that $\lambda_K$ is $\psi_{\alpha }$ with constant $b_{\alpha }$, one has:
\[
M(K)\ls \frac{C}{L_K} b_\alpha^{\frac{\alpha}{\alpha+4}} \frac{\log^{\frac{2}{\alpha +4}} (e+ n)}{n^{\frac{\alpha }{2(\alpha +4})}} .
\]
\end{theorem}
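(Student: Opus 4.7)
The plan is to mirror the proofs of Theorem~\ref{thm:Rqk-variant} and Theorem~\ref{thm:M-Zq}, but systematically substituting the sharper lower bound $v_k^-(Z_q(\mu)) \gr c\sqrt{\min(q, k^{\alpha/2}/b_\alpha^\alpha)}$ furnished by the preceding Lemma in place of $v_k^-(Z_q(\mu)) \gr c\sqrt{\min(q,\sqrt{k})}$. First I would prove the entropy and projection estimates: plug the new bound into Theorem~\ref{thm:covering1} combined with Remark~\ref{rem:duality}, and into Theorem~\ref{thm:main-s}. The only real calculation is evaluating
\[
\sup_{1 \ls m \ls k} 2^{-k/3m} \frac{1}{\sqrt{\min(q, m^{\alpha/2}/b_\alpha^\alpha)}} \simeq \sup_{1 \ls m \ls k} 2^{-k/3m} \brac{\frac{1}{\sqrt{q}} + \frac{b_\alpha^{\alpha/2}}{m^{\alpha/4}}} \ls \frac{C}{\sqrt{q}} + \frac{C b_\alpha^{\alpha/2}}{k^{\alpha/4}},
\]
where the first term is trivially bounded by $1/\sqrt{q}$ and the second is maximized at $m \simeq k$ (since $m \mapsto 2^{k/3m} m^{\alpha/4}$ has its minimum near $m \simeq k$). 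This sum is equivalent to $1/\sqrt{\min(q, k^{\alpha/2}/b_\alpha^\alpha)}$. Combining with the trivial bound $R_{k,q} \ls 1$ coming from $Z_q(\mu) \supseteq Z_2(\mu) = B_2^n$ yields both stated estimates.

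For the $M$-estimate, I apply Corollary~\ref{cor:M} with $r=1$ to obtain
\[
\sqrt{n} M(Z_q(\mu)) \ls C\sum_{k=1}^n \frac{1}{\sqrt{k}} \min\brac{1, \frac{n}{k}\log\brac{e+\frac{n}{k}}\brac{\frac{1}{\sqrt{q}} + \frac{b_\alpha^{\alpha/2}}{k^{\alpha/4}}}}.
\]
Exactly as in Theorem~\ref{thm:M-Zq}, I split the sum at the threshold $k_0 := n\log q/\sqrt{q}$: for $k \ls k_0$ use the trivial estimate $1$ to get $\ls C\sqrt{k_0}$, and for $k \gr k_0$ use the non-trivial estimate. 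The latter splits into the $1/\sqrt{q}$-contribution, which yields $\ls C n\log q/\sqrt{q k_0} \simeq \sqrt{k_0}$, and the $\psi_\alpha$-contribution $b_\alpha^{\alpha/2}/k^{\alpha/4}$, which I need to show is dominated by $1/\sqrt{q}$ throughout the range $k \gr k_0$. Requiring $b_\alpha^{\alpha/2}/k_0^{\alpha/4} \ls 1/\sqrt{q}$ and substituting $k_0 = n\log q/\sqrt{q}$ produces precisely the condition $q \ls c (n\log(e+n))^{2\alpha/(\alpha+4)}/b_\alpha^{4\alpha/(\alpha+4)}$ in the theorem (replacing $\log q$ by the larger $\log(e+n)$ on the right-hand side). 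The final bound is then $M(Z_q(\mu)) \ls C\sqrt{k_0/n} = C\sqrt{\log q}/\sqrt[4]{q}$.

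For the consequence on isotropic convex bodies $K$, I use the standard identifications $\lambda_{K/L_K}$ is isotropic log-concave, $Z_n(\lambda_{K/L_K}) \simeq K/L_K$, and hence $M(K) \simeq M(Z_n(\lambda_{K/L_K}))/L_K$. By the Berwald/Borell containment $Z_n(\lambda_{K/L_K}) \supseteq Z_{q_0}(\lambda_{K/L_K})$, monotonicity of $M$ under inclusion gives $M(K) \ls C M(Z_{q_0}(\lambda_{K/L_K}))/L_K$. Choosing $q_0$ to be the upper endpoint of the valid range and applying the just-proved $M$-estimate, the claimed bound reduces to the arithmetic identity $\sqrt{\log q_0}/\sqrt[4]{q_0} \simeq b_\alpha^{\alpha/(\alpha+4)} \log(e+n)^{2/(\alpha+4)}/n^{\alpha/(2(\alpha+4))}$, where the exponent on $\log(e+n)$ arises from $\tfrac{1}{2} - \tfrac{\alpha}{2(\alpha+4)} = \tfrac{2}{\alpha+4}$.

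The main obstacle is purely bookkeeping: tracking the interplay between the two terms $1/\sqrt{q}$ and $b_\alpha^{\alpha/2}/k^{\alpha/4}$ across the threshold $k_0$, and carefully solving the resulting inequality to produce the sharp exponent $2\alpha/(\alpha+4)$ on $n\log(e+n)$ and the exponent $4\alpha/(\alpha+4)$ on $b_\alpha$ in the valid range of $q$. The conceptual architecture, however, is identical to that of Theorem~\ref{thm:M-Zq}; the only new ingredient is the $\psi_\alpha$-refined lower bound on $v_k^-(Z_q(\mu))$ supplied by Klartag and E.~Milman's volume-radius bound.
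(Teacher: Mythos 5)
Your proposal is correct and follows exactly the route the paper intends: the paper gives no explicit proof of this theorem, stating only that one plugs the $\psi_\alpha$-refined lower bound on $v_k^-(Z_q(\mu))$ into the general machinery of Sections \ref{sec:covering} and \ref{sec:diam}, and your computation (the supremum over $m$, the split of the Dudley sum at $k_0=n\log q/\sqrt{q}$, and the requirement $k_0\gr b_\alpha^2 q^{2/\alpha}$ yielding the exponent $2\alpha/(\alpha+4)$) is precisely that verification. The only point to tidy is the passage from the condition $q\ls c(n\log q)^{2\alpha/(\alpha+4)}b_\alpha^{-4\alpha/(\alpha+4)}$ to the stated range with $\log(e+n)$, which enlarges rather than shrinks the range and hence needs the same short two-case check ($q\gr n^{\eps}$ versus $q<n^{\eps}$) implicit in the paper's own proof of Theorem \ref{thm:M-Zq}.
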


\begin{remark} 
Better estimates for the entropy-numbers $e_k(B_2^n,Z_q(\mu))$ and Gelfand numbers $c_k(Z_q(\mu)^\circ)$ may be obtained for various ranges of $k$ by employing the alternative known estimates mentioned in Remark \ref{rem:alternative}. However, these do not result in improved estimates on $M(Z_q(\mu))$, which was our ultimate goal. We therefore leave these improved estimates on the entropy and Gelfand numbers to the interested reader. We only remark that even the classical low-$M^*$ estimate (\ref{eq:low-M*}) coupled with our estimate on $M(Z_q(\mu))$ yield improved estimates for $e_k$ and $c_k$ in a certain range - a type of ``bootstrap" phenomenon.
\end{remark}

\section{Concluding remarks}

In this section we briefly describe an improved and simplified version of
the arguments from \cite{Giannopoulos-Stavrakakis-Tsolomitis-Vritsiou-TAMS}
and compare the resulting improved estimates to the ones from the previous section.
Following the general approach we employ in this work, the arguments are presented
for general centrally--symmetric convex bodies, and this in fact further simplifies
the exposition of \cite{Giannopoulos-Stavrakakis-Tsolomitis-Vritsiou-TAMS}.

We mainly concentrate on presenting an alternative proof of the following
slightly weaker variant of Theorem \ref{thm:Rqk-variant}:

\begin{theorem}\label{th:projZq-main}
Let $K$ be a centrally-symmetric convex body in $\Real^n$. For any $k =1 , \ldots, \lfloor n/2 \rfloor$
there exists $F\in G_{n,n-2k}$ such that:
\begin{equation*}P_F\bigl(K\bigr)\supseteq
\frac{c }{\frac{n}{k}\log^2 \left (e+\frac{n}{k}\right)} v_k^{-}(K) \,B_F\end{equation*}
where $c>0$ is an absolute constant.
\end{theorem}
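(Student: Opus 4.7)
My plan is to combine Pisier's $\alpha$-regular $M$-ellipsoid with the Milman--Pisier Theorem \ref{thm:milman-pisier}, using the ellipsoid as a bridge between the parameter $v^{-}_k(K)$ (which we control) and the parameter $w^{-}_k(K)$ (which is naturally controlled by Theorem \ref{thm:milman-pisier}). The bridging exploits the fact that $v^{-}_k(\Eps) = w^{-}_k(\Eps)$ for any ellipsoid $\Eps$, both quantities equalling the geometric mean of the $k$ smallest principal semi-axes of $\Eps$.

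Concretely, I would fix $\Eps = \Eps_{K,\alpha}$ with $\alpha = 2 - 1/\log(e+n/k)$, so that by \eqref{eq:Pisier-ek} the entropy numbers $e_k(K,\Eps)$ and $e_k(\Eps,K)$ are bounded by $t := P_\alpha (n/k)^{1/\alpha} \ls C\sqrt{(n/k)\log(e+n/k)}$. A standard covering--volumetric argument (of the type appearing in the proofs of Theorems \ref{thm:covering1} and \ref{thm:main-s}) transfers volumetric information between $K$ and $\Eps$: for every $H \in G_{n,k}$,
\[
\vrad(P_H(\Eps)) \gr c\, \vrad(P_H(K))/t \quad \text{and} \quad \vrad(\Eps \cap H) \ls Ct\, \vrad(K\cap H).
\]
Taking infima over $H\in G_{n,k}$ and using $v^{-}_k(\Eps) = w^{-}_k(\Eps)$ yields the bridge relation $w^{-}_k(K) \gr c\, v^{-}_k(K)/t^2$. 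Applying Theorem \ref{thm:milman-pisier} to $K$ then produces an $F \in G_{n,n-2k}$ with $P_F(K) \supseteq c\, w^{-}_k(K)/((n/k)\log(e+n/k))\, B_F$, and substituting the bridge gives a bound of the stated type.

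The main technical obstacle is the careful accounting of the logarithmic factors, in order to recover the precise $(n/k)\log^2(e+n/k)$ dependence stated in the theorem. The naive substitution of the bridge into the Milman--Pisier estimate loses an additional factor of $(n/k)$, because the double use of the covering argument contributes $t^2 \simeq (n/k)\log(e+n/k)$ to the denominator. To save this factor, one should replace one of the two covering transfers in the bridge by Pisier's sharper $c_k$-estimate \eqref{eq:Pisier-ck}: this produces a specific $E \in G_{n,n-k}$ with $P_E(K) \supseteq (k/n)^{1/\alpha}\, P_E(\Eps)/P_\alpha$, and since $P_E(\Eps)$ is an ellipsoid in $E$, a direct axis computation produces a subspace $F \subseteq E$ of codimension $k$ in $E$ (hence codimension $2k$ in the ambient space) such that $P_F(\Eps) = P_F(P_E(\Eps))$ contains a Euclidean ball of radius at least $v^{-}_k(P_E(\Eps)) \gr v^{-}_k(\Eps)$. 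Combined with the remaining single covering step $v^{-}_k(\Eps) \gr c\, v^{-}_k(K)/t$, the factors $(k/n)^{1/\alpha}/P_\alpha$ from Pisier's $c_k$-estimate and $1/t$ from the covering then multiply to give the required bound on $P_F(K)$.
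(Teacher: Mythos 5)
Your proposal is correct, but it follows a different path than the paper's own proof of this particular statement. Your final argument (third paragraph) --- Pisier's Gelfand-number estimate \eqref{eq:Pisier-ck} to find $E \in G_{n,n-k}$ with $P_E(K) \supseteq P_{\alpha}^{-1}(k/n)^{1/\alpha} P_E(\Eps)$, followed by the axis selection inside the ellipsoid $P_E(\Eps)$ and a single volumetric covering transfer $v_k^{-}(\Eps) \gr c\, v_k^{-}(K)/e_k(K,\Eps)$ --- is essentially a re-derivation of Theorem \ref{thm:main-s} from Section \ref{sec:diam}, and it correctly yields the stronger bound $c\,(\frac{n}{k}\log(e+\frac{n}{k}))^{-1} v_k^{-}(K)$ with a single logarithm, which of course implies the stated $\log^2$ estimate. (Your preliminary ``bridge'' through Theorem \ref{thm:milman-pisier} in the first two paragraphs is rightly diagnosed as losing a factor of $n/k$ and is not needed once you switch to \eqref{eq:Pisier-ck}; note also that Theorem \ref{thm:milman-pisier} plays no role in your final argument.) The paper's proof of Theorem \ref{th:projZq-main}, by contrast, deliberately avoids the direct use of \eqref{eq:Pisier-ck}: it starts from the entropy estimates \eqref{eq:Pisier-ek} and recovers a Gelfand-number bound $c_k(K^\circ,\Eps^\circ) \ls C\sqrt{n/k}\,\log^{3/2}(e+n/k)$ via V.~Milman's converse to Carl's theorem (Lemma \ref{lem:milman-m*-covering}), which is exactly where the extra logarithm is incurred. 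The purpose of that detour is expository --- to show that the covering-based approach of \cite{Giannopoulos-Stavrakakis-Tsolomitis-Vritsiou-TAMS} can be streamlined and loses only logarithmic factors compared to the sharper Section \ref{sec:diam} argument --- whereas your route buys the optimal (single-log) dependence at the cost of bypassing the point the paper is trying to illustrate in this section.
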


For the proof of Theorem \ref{th:projZq-main}, we use a sort of converse to Carl's theorem (\ref{eq:carl}) on the diameter of sections of a
convex body satisfying $2$-regular entropy estimates, which is due to V. Milman \cite{VMilman-1990} (see also \cite[Chapter 9]{BGVV-book-isotropic}).

\begin{lemma}\label{lem:milman-m*-covering}
Let $L$ be a symmetric convex body in $\mathbb R^n$. Then:
\[
\sqrt{k} \; c_k(L,B_2^n) \ls C \log(e + n/k) \sup_{k \ls m \ls n} \sqrt{m} \; e_m(L,B_2^n) .
\]
\end{lemma}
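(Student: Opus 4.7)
The plan is to prove the lemma by iteratively applying the Pajor--Tomczak-Jaegermann low-$M^\ast$ estimate (stated in \eqref{eq:low-M*}), together with Dudley's entropy estimate, using the hypothesis $A := \sup_{k\ls m\ls n} \sqrt{m}\, e_m(L,B_2^n)$ to control the Dudley tail. At each step the current section of $L$ is truncated to a subspace of smaller dimension with smaller diameter, and after roughly $\log(e+n/k)$ rounds a subspace of total codimension $k$ is produced whose section has diameter of order $A\log(e+n/k)/\sqrt{k}$.

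More precisely, the scheme is this. Let $F\subseteq\mathbb{R}^n$ be a subspace of dimension $N$ and let $R=\mathrm{diam}(L\cap F)$ in $B_F$. Since $e_m(L\cap F,B_F)\ls 2\, e_m(L,B_2^n)$, splitting Dudley's sum at $m=k$ using $e_m\ls R$ for $m<k$ and $e_m\ls 2A/\sqrt{m}$ for $m\gr k$ yields
\[
\sqrt{N}\, M^\ast_F(L\cap F) \ls C\bigl(R\sqrt{k} + A\log(e+N/k)\bigr).
\]
Now combining with the low-$M^\ast$ estimate (\ref{eq:low-M*}), applied inside $F$ with codimension $k_j$, produces a subspace $F'\subseteq F$ of codimension $k_j$ satisfying
\[
\mathrm{diam}(L\cap F')\ls \frac{C}{\sqrt{k_j}}\bigl(R\sqrt{k} + A\log(e+n/k)\bigr).
\]
Iterating along a chain $\mathbb{R}^n=F_0\supset F_1\supset\cdots\supset F_J$ with $J\simeq\log(e+n/k)$ and with codimensions $k_j$ chosen in geometric progression so that $\sum k_j=k$, the recursion $R_j\ls C\sqrt{k/k_j}\,R_{j-1}+CA\log(e+n/k)/\sqrt{k_j}$ telescopes to give $R_J\ls CA\log(e+n/k)/\sqrt{k}$, which is precisely the claimed bound.

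The main obstacle is calibrating the iteration so that (i) the codimensions sum to exactly $k$, (ii) the contraction in $R_{j-1}$ beats the accumulating factor $\sqrt{k/k_j}$, and (iii) only one factor of $\log(e+n/k)$ appears at the end rather than a power of it. The cleanest way around this is to sharpen Step~1 by balancing the Dudley split at the \emph{optimal} point $m_0\simeq A^2/R^2$ (instead of at $k$), which yields the self-improving bound $\sqrt{N}\,M^\ast_F(L\cap F)\ls CA\bigl(1+\log_+(NR^2/A^2)\bigr)$; the resulting recursion then behaves like a fixed-point iteration $x\mapsto CA\log_+(nx^2/A^2)/\sqrt{k_j}$ that stabilises at the desired order after $O(\log(e+n/k))$ rounds, giving the single logarithmic factor in the conclusion.
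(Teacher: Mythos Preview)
The paper does not prove this lemma; it is quoted as a known result due to V.~Milman (with a reference also to the BGVV book). Your overall strategy---iterating the Pajor--Tomczak-Jaegermann low-$M^\ast$ estimate together with Dudley's bound---is indeed the scheme behind Milman's argument, so you are on the right track. However, the execution you describe has a genuine gap.

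The difficulty is precisely the one you flag but do not actually resolve. Your first recursion reads $R_j \ls C\sqrt{k/k_j}\,R_{j-1} + CA\log(e+n/k)/\sqrt{k_j}$; since $\sum_j k_j = k$ forces every $k_j \ls k$, the multiplicative factor $C\sqrt{k/k_j}$ is always at least $C$, and the product of these factors over $J \simeq \log(e+n/k)$ rounds diverges rather than telescopes. Your proposed fix is to replace the crude split at $m=k$ in Dudley by the optimal split at $m_0 \simeq A^2/R^2$, asserting that this yields $\sqrt{N}\,M^\ast_F(L\cap F) \ls CA\bigl(1+\log_+(NR^2/A^2)\bigr)$. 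But this bound is only valid when $m_0 \gr k$, i.e.\ when $R \ls A/\sqrt{k}$: for $m$ in the range $m_0 \ls m < k$ the hypothesis gives no control on $e_m(L)$, and the diameter bound $e_m(L\cap F) \ls R$ is \emph{worse} than $A/\sqrt{m}$ there (since $R = A/\sqrt{m_0} > A/\sqrt{m}$). In that regime the Dudley sum still produces the $R\sqrt{k}$ term, and your ``fixed-point iteration'' never gets started. Concretely, if $R = T\cdot A/\sqrt{k}$ with $T$ large, your sharpened bound would predict a contribution of order $A\log T$, whereas the true contribution is of order $A\,T$.

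What is missing is a first step that brings $R$ down to order $A/\sqrt{k}$ using only $o(k)$ (or a fixed fraction of $k$) of the allotted codimension, without appealing to the unavailable entropy bounds for $m<k$; once that is achieved your sharpened recursion does work and delivers the single logarithmic factor. Milman's argument organizes the iteration to accomplish exactly this, but it requires more care than the sketch you give.
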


\begin{remark}
\rm Clearly, by applying a linear transformation, the statement equally holds with $B_2^n$ replaced by an arbitrary ellipsoid.
\end{remark}

\noindent {\it Proof of Theorem \ref{th:projZq-main}}. 
Given $k=1,\ldots,\lfloor n/2 \rfloor $, let $\Eps = \Eps_{K,\alpha_k}$ denote Pisier's $\alpha_k$-regular $M$-ellipsoid, for some $\alpha_k \in [1,2)$ to be determined.
Instead of directly using Pisier's estimate (\ref{eq:Pisier-ck}) on the Gelfand numbers as in the proof of Theorem \ref{thm:main-s} to deduce the existence of $E \in G_{n,n-k}$ so that:
\begin{equation} \label{eq:compare1}
P_E(K) \supseteq \frac{1}{P_{\alpha_k}} \brac{\frac{k}{n}}^{1/\alpha_k} P_E (\Eps ) ,
\end{equation}
the starting point in \cite{Giannopoulos-Stavrakakis-Tsolomitis-Vritsiou-TAMS} are the more traditional covering estimates (\ref{eq:Pisier-ek}):
\begin{equation} \label{eq:compare2}
\max\{e_k(K,\Eps),e_k(K^\circ,\Eps^\circ),e_k(\Eps,K),e_k(\Eps^\circ,K^\circ)\} \ls P_\alpha \brac{\frac{n}{k}}^{1/\alpha_k} .
\end{equation}
In \cite{Giannopoulos-Stavrakakis-Tsolomitis-Vritsiou-TAMS}, the following estimate was used (see \cite[Theorem 5.14]{Pisier-book}):
\[
c_k(K^\circ,\Eps^\circ) \ls C \sqrt{\frac{n}{k}} e_k(K^\circ,\Eps^\circ) .
\]
However, this estimate does not take into account the regularity of the covering. Consequently, a significantly improved estimate is obtained by employing Lemma \ref{lem:milman-m*-covering} (and the subsequent remark) which exploits this regularity:
\begin{eqnarray*}
\sqrt{k} \; c_k(K^\circ,\Eps^\circ) & \ls & C \log(e + n/k) \sup_{k \ls m \ls n} \sqrt{m} e_m(K^\circ,\Eps^\circ) \\
& \ls &  C \log(e + n/k) \sup_{k \ls m \ls n} \sqrt{m}  \; P_{\alpha_k} \brac{\frac{n}{m}}^{1/\alpha_k} .
\end{eqnarray*}
Even with this improvement, note that this is where the current approach incurs some unnecessary logarithmic price with respect to the approach in the previous sections: instead of using (\ref{eq:compare1}) directly, one uses (\ref{eq:compare2}) which Pisier obtains from (\ref{eq:compare1}) by applying Carl's theorem, and then uses the converse to Carl's theorem (Lemma \ref{lem:milman-m*-covering}) to pass back to Gelfand number estimates.

Using $\alpha_k = 2 - \frac{1}{\log(e + n/k)}$, we deduce that:
\[
c_k(K^\circ,\Eps^\circ) \ls C  \sqrt{\frac{n}{k}} \log^{3/2}(e + n/k) ,
\]
or in other words, the existence of $E \in G_{n,n-k}$ such that:
\[
P_E(K) \supseteq \frac{1}{C  \sqrt{\frac{n}{k}} \log^{3/2}(e + n/k)} P_E (\Eps ) .
\]

The rest of the proof is identical to that of Theorem \ref{thm:main-s}. For the ellipsoid $\Eps' := P_E(\Eps)$ we may always find a linear subspace $F \subseteq E$ of codimension $k$ in $E$ so that:
\[
P_F (\Eps') \supseteq \inf_{H \in G_k(E)} \set{ \vrad(P_{H} (\Eps'))} B_F .
\]
Estimating $\vrad(P_H (\Eps')) = \vrad(P_H (\Eps))$ by comparing to $\vrad(P_H (K))$
via the dual covering estimate on $e_k(K,\Eps)$ (note that there is no need to use the duality of entropy theorem here), we obtain:
\[
\vrad(P_H(\Eps')) \gr \frac{1}{2 e_k(K,\Eps)} \vrad(P_H (K)) \gr \frac{1}{2 C  \sqrt{\frac{n}{k}} \log^{1/2}(e + n/k)} \vrad(P_H (K))  .
\]
Combining all of the above, we deduce the existence of $F \in G_{n,n-2k}$ so that:
\[
P_F(K) \supseteq \frac{1}{C' \frac{n}{k} \log^{2}(e + n/k)} \vrad(P_H (K)) B_F .
\]
This concludes the proof.
\qed

\medskip

Having obtained a rather regular estimate on the Gelfand numbers, the next goal is to obtain an entropy estimate. To this end,
 one can use Carl's theorem (\ref{eq:carl}) or (\ref{eq:carl2}), as we do in Section \ref{sec:diam}. The approach in \cite{Giannopoulos-Stavrakakis-Tsolomitis-Vritsiou-TAMS} proceeds by employing an entropy extension theorem of
Litvak, V. Milman, Pajor and Tomczak-Jaegermann \cite{Litvak-VMilman-Pajor-Tomczak-2006}. We remark that this too
may be avoided, by employing the following elementary covering estimate (see e.g. \cite[Chapter 9]{BGVV-book-isotropic}):

\begin{lemma}\label{lem:6.5.3}
Let $K$ be a symmetric convex body in $\mathbb R^n$ and assume that
$B_2^n\subseteq \rho K$ for some $\rho \gr 1$. Let $W$ be a subspace
of $\mathbb R^n$ with $\dim W=m$ and $P_{W^\perp}(K)\supseteq B_{W^\perp}$. Then, we have
\begin{equation*}\label{eq:6.5.10}
N(B_2^n, 4K)\ls \left(3 \rho\right)^m.
\end{equation*}
\end{lemma}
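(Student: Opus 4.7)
The plan is to reduce the covering of $B_2^n$ by translates of $4K$ to a covering problem living inside the $m$-dimensional subspace $W$, using the hypothesis $P_{W^\perp}(K)\supseteq B_{W^\perp}$ to absorb the $W^\perp$-component of any $x\in B_2^n$ into a single element of $K$.

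First, I would perform a lifting step: for any $x\in B_2^n$, since $P_{W^\perp}(x)\in B_{W^\perp}\subseteq P_{W^\perp}(K)$, there exists $z = z(x)\in K$ with $P_{W^\perp}(z)=P_{W^\perp}(x)$, and hence $x-z\in W$. The two norm bounds $\|x\|_K\leq \rho$ (from $B_2^n\subseteq\rho K$) and $\|z\|_K\leq 1$ combine via the triangle inequality to yield $\|x-z\|_K\leq \rho+1\leq 2\rho$, so $x-z\in 2\rho K\cap W = 2\rho(K\cap W)$, where the last equality is immediate from $W$ being a linear subspace.

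Next, I would cover the $m$-dimensional symmetric convex body $2\rho(K\cap W)\subseteq W$ by translates of $2(K\cap W)$. Writing $C=K\cap W$, the standard maximal-separation / volume-packing argument in $W$ gives $N(\rho C,C)\leq (1+2\rho)^m\leq (3\rho)^m$ since $\rho\geq 1$; rescaling both sides by $2$, we obtain centers $c_1,\dots,c_N\in W$ with $N\leq (3\rho)^m$ such that $2\rho(K\cap W)\subseteq \bigcup_i (c_i+2(K\cap W))$.

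Finally I would assemble the cover: for any $x\in B_2^n$ there is an index $i$ and some $k\in K\cap W$ with $x-z = c_i + 2k$, so $x = c_i + (z+2k)$. Since $z\in K$ and $k\in K$, convexity together with $0\in K$ gives $z+2k\in 3K\subseteq 4K$, whence $x\in c_i+4K$. This yields $N(B_2^n,4K)\leq (3\rho)^m$ as claimed. The only delicate point is the bookkeeping of constants: one must cover by $2(K\cap W)$ rather than $K\cap W$ itself, so that the final combination $z+2k$ lands in $3K\subseteq 4K$ and the covering count stays at $(2\rho+1)^m\leq (3\rho)^m$ rather than degrading to $(5\rho)^m$.
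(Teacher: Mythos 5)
Your proof is correct. The paper itself gives no proof of this lemma (it only cites \cite[Chapter 9]{BGVV-book-isotropic}), but your argument---absorbing the $W^{\perp}$-component of $x\in B_2^n$ into a single point $z\in K$, covering $2\rho(K\cap W)$ by translates of $2(K\cap W)$ via the standard volumetric bound $N(\rho C, C)\ls (2\rho+1)^m\ls(3\rho)^m$, and recombining $z+2k\in 3K\subseteq 4K$---is exactly the standard one, with the constants tracked correctly.
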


Finally, having a covering estimate at hand, the estimate on $M(K)$ is obtained by Dudley's entropy bound (\ref{eq:Dudley}). Plugging in the lower bounds on $v_k^{-}(Z_q(\mu))$ given in Section \ref{sec:results}, the results of \cite{Giannopoulos-Stavrakakis-Tsolomitis-Vritsiou-TAMS} are recovered and improved.

As the reader may wish to check, the improved approach of this section over the arguments of \cite{Giannopoulos-Stavrakakis-Tsolomitis-Vritsiou-TAMS} yields estimates which are almost as good as the ones obtained in Section \ref{sec:results}, and only lose by logarithmic terms.

\begin{acknowledgement}
The first named author acknowledges support from the programme ``API$\Sigma$TEIA II"
of the General Secretariat for Research and Technology of Greece. The second named author is supported by ISF (grant no. 900/10), BSF (grant no. 2010288), Marie-Curie Actions (grant no. PCIG10-GA-2011-304066) and the E. and J. Bishop Research Fund.
\end{acknowledgement}


\begin{thebibliography}{99.}


\bibitem{Artstein-VMilman-Szarek-2004}
\textrm{S.\ Artstein, V.\ D.\ Milman and S.\ J.\ Szarek},
\textsl{Duality of metric entropy},
Annals of Math.\ \textbf{159} (2004), 1313--1328.


\bibitem{Ball-PhD}
{\rm K. M.~Ball}, {\sl Isometric problems
in $\ell_p$ and sections of convex sets}, Ph.D. Dissertation,
Trinity College, Cambridge (1986).

\bibitem{Ball-kdim-sections}
{\rm K. M.~Ball}, {\sl Logarithmically concave functions and sections of convex sets in
$\mathbb{R}^n$}, Studia Math. {\bf 88} (1988), 69--84.



\bibitem{BerwaldMomentComparison}
{\rm L.~Berwald}. {\sl Verallgemeinerung eines {M}ittelwertsatzes von {J}. {F}avard f\"ur
  positive konkave {F}unktionen}, Acta Math. {\bf 79} (1947), 17--37.



\bibitem{BourgainMaximalFunctionsOnConvexBodies}
{\rm J.~Bourgain}, {\sl On high-dimensional maximal functions associated to convex bodies},
Amer. J. Math. {\bf 108} (1986), 1467--1476.


\bibitem{Bourgain-1991} {\rm J. Bourgain}, {\sl On the
distribution of polynomials on high dimensional convex sets},
Lecture Notes in Mathematics {\bf 1469}, Springer, Berlin (1991), 127--137.

\bibitem{BKM}
{\rm J.~Bourgain, B.~Klartag and V.\ D.~Milman},
{\sl Symmetrization and isotropic constants of convex bodies},
in {\em Geometric Aspects of Functional Analysis}, volume 1850 of
  {\em Lecture Notes in Mathematics}, pages 101--115. Springer, 2002-2003.

\bibitem{Bourgain-VMilman-1987}
\textrm{J.\ Bourgain and V.\ D.\ Milman},
\textsl{New volume ratio properties for convex symmetric bodies in ${\mathbb R}^n$},
Invent.\ Math.\ \textbf{88} (1987), 319--340.


\bibitem{BGVV-book-isotropic}
\textrm{S.\ Brazitikos, A.\ Giannopoulos, P.\ Valettas and B-H.\ Vritsiou},
\textsl{Geometry of isotropic convex bodies}, Mathematical Surveys and Monographs {\bf 196}, Amer. Math. Society (2014).

\bibitem{Carl-1981} \textrm{B.\ Carl}, \textsl{Entropy numbers, $s$-numbers, and eigenvalue problems},
J. Funct. Anal. {\bf 41} (1981), 290--306.


\bibitem{Figiel-Tomczak-1979} {\rm T. Figiel and N. Tomczak-Jaegermann},
{\sl Projections onto Hilbertian subspaces of Banach spaces}, Israel
J. Math. {\bf 33} (1979), 155--171.


\bibitem{Giannopoulos-Stavrakakis-Tsolomitis-Vritsiou-TAMS}
\textrm{A.\ Giannopoulos, P.\ Stavrakakis, A.\ Tsolomitis and B-H.\ Vritsiou},
\textsl{Geometry of the $L_q$-centroid bodies of an isotropic log-concave measure},
Trans. Amer. Math. Soc. (to appear).


\bibitem{Klartag-LowM} \textrm{B.\ Klartag},
\textsl{A geometric inequality and a low {$M$}-estimate},
Proc. Amer. Math. Soc. \textbf{132} (2004), 2619--2628 (electronic).


\bibitem{Klartag-2006} \textrm{B.\ Klartag},
\textsl{On convex perturbations with a bounded isotropic constant},
Geom.\ Funct.\ Anal.\ \textbf{16} (2006), 1274--1290.

\bibitem{Klartag-Psi2}
{\rm B.~Klartag},
{\sl Uniform almost sub-{G}aussian estimates for linear functionals on
  convex sets}, Algebra i Analiz {\bf 19} (2007), 109--148.

\bibitem{Klartag-EMilman-2012} \textrm{B.\ Klartag and E.\ Milman},
\textsl{Centroid Bodies and the Logarithmic Laplace Transform -- A Unified Approach},
J.\ Funct.\ Anal.\ \textbf{262} (2012), 10--34.


\bibitem{Klartag-VMilman-2005} {\rm B.\ Klartag and V. D.\ Milman},
{\sl Rapid Steiner Symmetrization of most of a convex body and the
slicing problem}, Combin. Probab. Comput. {\bf 14} (2005), 829-843.


\bibitem{Lewis-1979} {\rm D. R. Lewis}, {\sl Ellipsoids defined
by Banach ideal norms}, Mathematika {\bf 26} (1979), 18--29.

\bibitem{Litvak-VMilman-Pajor-Tomczak-2006} {\rm A. Litvak, V. D. Milman, A. Pajor and N. Tomczak-Jaegermann},
{\sl Entropy extension}, Funct. Anal. Appl. {\bf 40} (2006),
298--303.

\bibitem{Lutwak-Yang-Zhang-2000} {\rm E.\ Lutwak, D.\ Yang and G.\ Zhang}, {\sl $L^p$ affine isoperimetric
inequalities}, J. Differential Geom. {\bf 56} (2000), 111--132.

\bibitem{EMilman-2014} {\rm E.\ Milman}, {\sl On the mean width of isotropic convex bodies and
their associated $L_p$-centroid bodies}, Int. Math. Research Notices (to appear).


\bibitem{MilmanGeometricalInequalities} {\rm V. D. Milman},
{\sl Geometrical inequalities and mixed volumes in the local theory of {B}anach spaces},
Ast\'erisque {\bf 131} (1985), 373--400.


\bibitem{VMilman-1990} {\rm V. D. Milman}, {\sl A note on a low $M^{\ast }$-estimate}, in ``Geometry of Banach spaces,
Proceedings of a conference held in Strobl, Austria, 1989" (P. F. M\"{u}ller and W. Schachermayer, Eds.),
LMS Lecture Note Series, Vol. 158, Cambridge University Press (1990), 219--229.

\bibitem{Milman-Pajor-LK}
{\rm V.~D. Milman and A.~Pajor},
{\sl Isotropic position and interia ellipsoids and zonoids of the unit
  ball of a normed $n$-dimensional space}, in {\em Geometric Aspects of Functional Analysis}, volume 1376 of
  {\em Lecture Notes in Mathematics}, pages 64--104. Springer-Verlag,
  1987-1988.

\bibitem{VMilman-Pisier-1987} {\rm V. D. Milman and G. Pisier}, {\sl Gaussian processes and mixed
volumes}, Annals of Probability {\bf 15} (1987), 292--304.

\bibitem{VMilman-Schechtman-book}
\textrm{V.\ D.\ Milman and G.\ Schechtman},
\textsl{Asymptotic Theory of Finite Dimensional Normed Spaces},
Lecture Notes in Mathematics \textbf{1200} (1986), Springer, Berlin.




\bibitem{PajorTomczakLowMStar} \textrm{A.\ Pajor and N.\ Tomczak-Jaegermann},
\textsl{Subspaces of small codimension of finite-dimensional {B}anach spaces},
Proc. Amer. Math. Soc. {\bf97} (1986), 637--642.


\bibitem{PaourisGAFA} \textrm{G.\ Paouris},
\textsl{Concentration of mass in convex bodies},
Geometric and Functional Analysis  {\bf16} (2006), 1021--1049.

\bibitem{PaourisTAMS} \textrm{G.\ Paouris},
\textsl{Small ball probability estimates for log-concave measures},
Trans.\ Amer.\ Math.\ Soc.\ {\bf 364} (2012), 287--308.

\bibitem{Pisier-1982} {\rm G. Pisier}, {\sl Holomorphic semi-groups
and the geometry of Banach spaces}, Annals of Math. {\bf 115}
(1982), 375--392.

\bibitem{Pisier-1989} {\rm G.\ Pisier}, {\sl A new approach to several results of V. Milman},
J. Reine Angew. Math. {\bf 393} (1989), 115--131.

\bibitem{Pisier-book}
\textrm{G.\ Pisier},
\textsl{The Volume of Convex Bodies and Banach Space Geometry},
Cambridge Tracts in Mathematics \textbf{94} (1989).

\bibitem{Vritsiou-2013} {\rm B-H.\ Vritsiou}, {\sl Further
unifying two approaches to the hyperplane conjecture}, International
Math. Research Notices, doi: 10.1093/imrn/rns263.

\end{thebibliography}
\end{document}